\documentclass[11pt]{amsart}

\usepackage{amsfonts,amssymb,amsmath}
\usepackage{graphicx}
\usepackage{psfrag}
\usepackage{color,xcolor}
\usepackage{verbatim}
\usepackage{epstopdf}
\usepackage[text={6in,9in},centering]{geometry}
\newtheorem{thm}{Theorem}[section]
\newtheorem{lem}[thm]{Lemma}

\theoremstyle{remark}
\newtheorem{rem}[thm]{Remark}

\numberwithin{equation}{section}

\newcommand{\R}{{\mathbb R}}
\newcommand{\Z}{{\mathbb Z}}
\newcommand{\E}{{\mathbb E}}

\newcommand{\B}{{\mathcal B}}
\newcommand{\C}{{\mathcal C}}
\newcommand{\I}{{\mathcal I}}
\newcommand{\Ra}{{\mathcal R}}

\newcommand{\bfa}{{\mathbf a}}
\newcommand{\bfv}{{\mathbf v}}
\newcommand{\bfw}{{\mathbf w}}
\newcommand{\bfk}{{\mathbf k}}

\def\vep{\varepsilon}
\def\leb{\mathcal{L}}
\newcommand{\card}{{\rm card\,}}
\newcommand{\dist}{{\rm dist\,}}

\begin{document}

\title{Local level sets of the Takagi--van der Waerden function}

\author{Lai Jiang}
\address{Hangzhou Institute for Advanced Study, UCAS, Hangzhou 310024}
\email{jianglai@ucas.ac.cn}

\author{Ting-Ting Ying}
\address{Zhejiang University, Hangzhou 310058}
\email{12635035@zju.edu.cn}

\author{Yi-Yang Zhang}
\address{Zhejiang University, Hangzhou 310058}
\email{12635030@zju.edu.cn}

\thanks{Corresponding author: Lai Jiang}

\subjclass[2020]{28A80}

\date{}

\keywords{Takagi function, level sets, local level sets.}

\begin{abstract}

In this paper, we investigate the Takagi--van der Waerden function,
$$
T_r(x) = \sum_{n=0}^{\infty} \frac{\phi(r^n x)}{r^n} ,\quad x\in [0,1], \quad r \geq 2,
$$
where $\phi(x)={\rm dist}(x,\Z)$ represents the distance from $x$ to the nearest integer.

Lagarias and Maddock~\cite{LM12} introduced the notion of local level sets for the classical Takagi function $T_2$. They proved that if $y$ is uniformly distributed over the range of $T_2$, then the expected number of local level sets contained in the level set at height $y$ equals $3/2$. We define the corresponding local level sets for every integer $r\geq2$ and prove that, if $y$ is uniformly distributed over the range of $T_r$, then the expected number of local level sets contained in the level set at height $y$ equals $(r+1)/r$ if $r$ is even, and $(r+1)/(r-1)$ if $r$ is odd.

\end{abstract}

\maketitle

\section{Introduction}
\subsection{Background}

In 1903, Takagi introduced a classical example of a continuous nowhere differentiable function, that is
$$
 T_2(x)=\sum_{n=0}^{\infty}\frac{\phi(2^n x)}{2^n}, \qquad x \in [0,1],
$$
where $\phi(x):=\dist(x,\Z)$ represents the distance from $x$ to the nearest integer. 
It is natural to generalize the function by replacing $2$ by integers $r \geq 2$, that is
\begin{equation}\label{eq:def-tr}
 T_r(x):=\sum_{n=0}^{\infty}\frac{\phi(r^n x)}{r^n}, \qquad x \in [0,1].
\end{equation}
This generalization is known as the Takagi--van der Waerden function.
For more background and history on the Takagi function and its generalizations, we refer to the survey of Allaart and
Kawamura~\cite{AK11}.

Given $y\in\R$, write
$$
 L_r(y):=\{x\in[0,1]:T_r(x)=y\}.
$$
Then $L_r(y)$ is called the level set of $T_r$ at height $y$.
Level sets can be viewed as a special case of slices, which is important in fractal geometry.
For generalized Takagi functions, connections between the Assouad dimension of the graph and the Hausdorff dimensions of its slices were established in~\cite{ABK24,Y20}.
Maddock~\cite{M10} obtained an upper bound for the box dimensions of the level sets of $T_2$, 
and de Amo et al.~\cite{ABDF11} subsequently determined their supremum. 
The dimensions of level sets of Weierstrass functions have also been studied; see~\cite{B02,BKM25} for more details.

For the classical Takagi function, Buczolich~\cite{B08} proved
that $L_2(y)$ is finite for Lebesgue almost every $y$.
Allaart~\cite{A13} studied the possible finite cardinalities
and their distribution, showing that every positive even integer
occurs and that two is the most common cardinality when the
height is chosen uniformly from the range of $T_2$.
For every integer $r\geq2$, Allaart~\cite{A14} proved that
$L_r(y)$ is finite for almost every $y$, whereas
$L_r(T_r(x))$ is uncountable for almost every $x\in[0,1)$.
He also established the singularity of the occupation measure.

Lagarias and Maddock~\cite{LM11,LM12} introduced an equivalence relation on the domain of the classical Takagi function,
whose equivalence classes are called local level sets.
These local level sets form a partition of each level set.
They proved that every local level set is either finite or a Cantor set, and that the expected number of local level sets
contained in a level set is $3/2$ when the height is chosen uniformly from the range of $T_2$.
Allaart~\cite{A12} subsequently gave a more elementary proof of this formula.
Allaart~\cite{A13S} extended this notion to signed Takagi functions and obtained a formula for the expected number of local level sets.

In this paper, we define local level sets for the Takagi--van der Waerden function $T_r$ for every integer $r\geq2$.
The definition agrees with the classical one when $r=2$,
and we prove that each local level set is contained in a level set.
We then compute the expected number of local level sets contained in a level set when the height is chosen uniformly from the range of $T_r$.

\subsection{Main result}

For any $y \in \R$, let $N_r^{\rm loc}(y)$ denote the number of local level sets contained in $L_r(y)$.  
Let $\leb$ denote Lebesgue measure on $\R$. 
We say that $y$ is uniformly distributed over an interval $J$ if for every Borel set $B\subset \R$,
$$
 \mathbb{P}(y\in B) =\frac{\leb ( B\cap J )}{\leb(J)}.
$$

Our main result is as follows.
\begin{thm}\label{thm:main}
Fix an integer $r\geq2$.  If $y$ is uniformly distributed over the range of $T_r$, then
$$
 \E \big[N_r^{\rm loc}(y)\big]=
 \begin{cases}
 \frac{r+1}{r},&r\text{ is even},\\[3pt]
 \frac{r+1}{r-1},&r\text{ is odd}.
 \end{cases}
$$
\end{thm}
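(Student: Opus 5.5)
The plan is to express $\E[N_r^{\rm loc}(y)]$ as a sum of oscillations of $T_r$ over a countable family of $r$-adic intervals, and then to evaluate that sum by self-similarity. This follows Allaart's elementary proof of the $r=2$ case, which I expect to transfer.

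\emph{Step 1: reduction to a counting integral.} I will use the structure of local level sets established earlier in the paper. Writing $x=\sum_k a_k r^{-k}$, the slope of the partial sum $\sum_{n<N}\phi(r^n x)/r^n$ is $\sum_{k\le N}\varepsilon_k$, where $\varepsilon_k=\pm1$ records whether $\phi(r^{k-1}x)$ is increasing or decreasing. For odd $r$, the middle digit $a_k=(r-1)/2$ has to be split according to the next digits. A local level set is generated by reflecting tails of $x$ at \emph{balance times} $n$, that is, times with $\sum_{k\le n}\varepsilon_k=0$.

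I will choose a canonical representative in each local level set: the point for which every free reflection is taken to the left. Let $\Omega$ be the set of canonical representatives. Then
$$
N_r^{\rm loc}(y)=\card\big(\Omega\cap T_r^{-1}(y)\big),
$$
and, with $R$ the range of $T_r$, Fubini gives
$$
\E[N_r^{\rm loc}]=\frac{1}{\leb(R)}\int \card\big(\Omega\cap T_r^{-1}(y)\big)\,dy .
$$

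\emph{Step 2: decomposition of $\Omega$.} I will show that, up to a null set of heights, $\Omega$ is a disjoint union of pieces $\Omega_I$. Each piece is indexed by an $r$-adic interval $I$ of some generation $n$ at which the path $\sum\varepsilon_k$ \emph{first} reaches $0$ from a prescribed side, or never reaches $0$ again. On such an interval, self-similarity gives
$$
T_r(x)=c_I+\frac{1}{r^n}\,T_r\big(r^n x-j\big),
$$
with no linear term because the slope is balanced. Moreover, the canonical points in $I$ contribute exactly one preimage per height in $c_I+r^{-n}R$. Consequently
$$
\int \card\big(\Omega\cap T_r^{-1}(y)\big)\,dy=\sum_I r^{-n(I)}\,\leb(R),
$$
so that
$$
\E[N_r^{\rm loc}]=\sum_I r^{-n(I)} .
$$
To this sum I will add the contribution of the unique "never balanced" branch, which is the main hump and gives exactly $1$.

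\emph{Step 3: counting first returns.} It remains to compute $\sum_I r^{-n(I)}$. This is a generating function of first-return times of the walk $\sum\varepsilon_k$, where each digit carries weight $1/r$.

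For even $r$, exactly $r/2$ digits give $+1$ and $r/2$ give $-1$, so the walk is a simple symmetric walk with $r$ digit choices per step. The total contribution of balanced intervals, counted with the canonical-side restriction, should be a geometric series summing to $1/r$, giving $(r+1)/r$.

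For odd $r$, the middle digit behaves like a "free" step that extends the current direction. Summing over the runs of middle digits changes the per-step weight from $1/r$ to $1/(r-1)$, and the total becomes $2/(r-1)$, giving $(r+1)/(r-1)$.

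\emph{Main obstacle.} I expect Step 2 to be the hardest part: proving that each balanced piece contributes exactly one canonical point per height, with no overcounting across generations and no undercounting. For odd $r$ this is complicated further by the middle-digit ambiguity in $\varepsilon_k$. I would first treat $r=2$, checking that the recipe reproduces Allaart's value $3/2$, and then handle the odd case by grouping each maximal run of middle digits into a single generalized step before applying the same first-return argument.
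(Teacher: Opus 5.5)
There is a genuine gap. It sits in Step 2, and Step 3 then chooses its sums so that the target value comes out. I use the paper's notation: $\vep_n$ are the digits, $a_n=\pm1$, and $D_n=a_1+\cdots+a_n$.

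\textbf{Neither one-preimage-per-height claim in Step 2 holds.}

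\emph{The never-balanced branch does not cover $R$.} Take $r=2$. From \eqref{eq:cal-tr} and $2a_nD_n=D_n^2-D_{n-1}^2+1$,
\[
T_2(x)=\frac34-\frac18\sum_{n\ge1}D_n(x)^2 2^{-n}.
\]
If $D_n(x)\neq0$ for all $n$, then $D_n^2\ge1$ for odd $n$ and $D_n^2\ge4$ for even $n$. Hence $T_2(x)\le\frac12<\frac23=M_2$, so this branch never reaches the top of $R$.

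\emph{The balanced pieces fail as well.} On a first-return interval $I$ of generation $n$ the walk restarts at $0$. So the canonical points of $I$ form an affine copy of all of $\Omega$ and contribute $r^{-n}\int N_r^{\rm loc}$, not $r^{-n}\leb(R)$. If they contributed $r^{-n}\leb(R)$, then $\Omega$ itself would give one preimage per height and the expectation would be $1$. Reading the pieces as last-return pieces instead only moves the defect onto the never-balanced set.

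\emph{Step 1 is incomplete for $r\ge3$.} When $D_n(x)=0$, (C2) imposes nothing on $\vep_n$, so the digit at a balance time is free, not just the tail reflection. In the paper this is the free letter after $\sigma_0=-1$: $rF_{-1}(S_k(u))=F_0(u)$ for every $k$, and these $r$ sibling copies lie in the same local level sets.

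\textbf{The correct bookkeeping.} With the siblings merged, the first-return decomposition gives
\[
\int_\R N_r^{\rm loc}(y)\,dy=V_\infty+\frac1r\int_\R N_r^{\rm loc}(y)\,dy,
\]
where $V_\infty$ is the multiplicity integral of the never-balanced part. The factor $\frac1r$ is the merged first-return mass, and this holds for every $r$:
\begin{enumerate}
\item The merged groups are the half-intervals $I_{\bfw}$ at which $\sigma_0$ first equals $-1$, each carrying copies of scale $r^{-|\bfw|-1}$.
\item These groups tile $I$ up to a null set. This is because $a_na_{n+1}=(-1)^{w_n}$ with i.i.d. letters, which makes $(D_n)$ a recurrent walk (a persistent one when $r$ is odd).
\end{enumerate}

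\textbf{Consequences for Step 3.} Your $1/r$ is the right merged mass for even $r$. The same computation gives $1/r$, not $2/(r-1)$, for odd $r$. So your accounting would output $1+1/r$ for every $r$. That is wrong for odd $r$, and correct for even $r$ only because $\frac{r}{r-1}(1-r^{-2})=1+\frac1r$.

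Indeed, inserting the true value $\int N_r^{\rm loc}=A_0$ into the identity forces $V_\infty=\frac{r-1}{r}A_0$. This equals $\frac12$ for even $r$, and $\frac{r+1}{2(r-1)}>M_r$ for odd $r$. So the never-balanced part contributes $V_\infty/M_r$ to the expectation, namely $1-r^{-2}$ (even $r$) or $(r+1)/r$ (odd $r$), not $1$.

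\textbf{What is missing.} Computing $V_\infty$ is the real content, and your proposal has no mechanism for it. On an unbalanced piece the graph is a copy of a tilted function $F_s=T_r+sx$ with $s\neq0$ (Lemma~\ref{lem:cal-fs-relation}), not of $T_r$. Moreover, different classes with the same tilt overlap in height. For $r=3$, $T_3=\frac13F_1(3x)$ on $[0,\frac16]$ and $T_3=\frac13(1+F_1(3x-1))$ on $[\frac13,\frac12]$, with ranges $[0,\frac5{12}]$ and $[\frac13,\frac34]$.

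The paper never chooses representatives:
\begin{enumerate}
\item It shows $N_r^{\rm loc}(y)=\lim_n\card(\C_{0,n}(y))$ off the countable set $T_r(\B_r)$, and integrates.
\item It tracks the whole family $A_s=\lim_n\sum_{C\in\C_{s,n}}|\Ra_s(C)|$. These satisfy $rA_s=pA_{s+1}+qA_{-s-1}$ for $s\neq-1$, and $rA_{-1}=A_0$; the latter is exactly the sibling merging.
\item The a priori bound $|A_s-|s|/2|\le3M_r$ selects the solution $A_0=pr/(2q(r-1))$.
\item Baba's value of $M_r$ enters only at the end, through $\E[N_r^{\rm loc}]=A_0/M_r$.
\end{enumerate}
Your Step 2 would need a device of this kind: one that controls multiplicities on all tilted pieces and closes the recursion with a growth condition.
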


The paper is organized as follows.
Section~2 introduces the concept of local level sets and word addresses, and establishes the functional identities.
Section~3 introduces word classes and establishes a one-to-one correspondence between chains and local level sets.
This correspondence yields a formula for the number of local level sets at a given height.
Section~4 derives a recursion for the sums of the lengths of the ranges associated with these classes.
Then we use it to compute the expected number of local level sets.

\section{Local level sets}

\subsection{Notations and local level sets}

In the remainder of the paper, we fix an integer $r\geq2$.  
For each $x\in(0,1)$, write its $r$-based expansion by
$$
 x=0.\vep_1\vep_2\cdots
 =\sum_{n=1}^{\infty}\frac{\vep_n}{r^n},
 \qquad \vep_n \in\{0,1,\ldots,r-1\}.
$$
If $x$ has two possible $r$-based expansions, we always choose the finite one. 
Define
\begin{equation}\label{eq:def-br}
 \B_r:=\bigcup_{n\in \Z^+} \Big\{\frac{j}{2r^n}: 0\leq  j \leq 2r^n \Big\}.
\end{equation}
Thus $\B_r$ consists of the endpoints and midpoints of all $r$-adic
intervals.

For each $x\in[0,1]$ and $n \in \Z^+$, we write
$$
 a_n(x):=
 \begin{cases}
  1,&r^{n-1} x\bmod1\in[0, \frac{1}{2}),\\
  -1,&r^{n-1}x\bmod1\in[\frac{1}{2},1).
 \end{cases}
$$
We write $ D_0(x)=0 $ and 
$$
 D_n(x) := \sum_{j=1}^n a_j(x).
$$

Fix $x,x'\in(0,1)$ with the $r$-based expansions $ x=0.\vep_1\vep_2\cdots$ and $x'=0.\vep_1'\vep_2'\cdots$, respectively. 
We write $x\sim_r x'$ if the following conditions hold for every $n\in \Z^+$:

\begin{enumerate}
\item[(C1)] $|D_n(x)|=|D_n(x')|$,
\item[(C2)] if $D_n(x)\neq0$, then either $\vep_n=\vep_n' $ or $\vep_n+\vep_n'=r-1$,
\end{enumerate}

It follows directly from conditions (C1) and (C2) that $\sim_r$ is an equivalence relation.

\begin{lem}\label{lem:sim-symmetry}

If $x\in [0,1] \setminus\B_r$, then $x\sim_r(1-x)$.
\end{lem}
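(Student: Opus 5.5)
Since $x\notin\B_r$, both $x$ and $1-x$ lie in $(0,1)$ and are not $r$-adic rationals, so each has a unique $r$-based expansion with infinitely many digits different from $0$ and from $r-1$. The plan is to show that the digits of $1-x$ are obtained from those of $x$ by complementing every digit, and that every $a_n$ changes sign. Then (C1) and (C2) follow at once.

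First, the digits. Write $x=0.\vep_1\vep_2\cdots$. Because $\sum_{n\ge1}(r-1)/r^n=1$, we have
$$
1-x=\sum_{n=1}^\infty\frac{r-1-\vep_n}{r^n}.
$$
This is an $r$-based expansion of $1-x$ with digits $\vep_n'=r-1-\vep_n\in\{0,\dots,r-1\}$. It is not eventually $0$ or eventually $r-1$, since the expansion of $x$ is not. Hence it is the unique expansion of $1-x$, and $\vep_n+\vep_n'=r-1$ for every $n$. Consequently (C2) holds for every $n$, regardless of whether $D_n(x)$ vanishes.

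Next, the signs $a_n$. Set $t_n=r^{n-1}x\bmod 1$. Since $x$ is not an $r$-adic rational, $r^{n-1}x\notin\Z$, so $t_n\in(0,1)$ and
$$
r^{n-1}(1-x)\bmod1 = 1-t_n .
$$
Moreover $t_n\neq\tfrac12$. Otherwise $x=(2k+1)/(2r^{n-1})$ for some integer $k$, which lies in $\B_r$: write it as $j/(2r^{m})$ with $m=n-1$, or with $m=n$ (and $j$ multiplied by $r$) if $n=1$, so that $m\in\Z^+$. Thus $t_n\in(0,\tfrac12)$ if and only if $1-t_n\in(\tfrac12,1)$, which gives $a_n(1-x)=-a_n(x)$ for all $n$. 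Summing, $D_n(1-x)=-D_n(x)$, so $|D_n(1-x)|=|D_n(x)|$, and (C1) holds.

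Combining the two steps, $x\sim_r(1-x)$. The only point needing care is the boundary behaviour, namely that $x\notin\B_r$ excludes both $t_n=0$ and $t_n=\tfrac12$, and also excludes the non-unique expansions. This is exactly why the hypothesis $x\notin\B_r$ is imposed. Otherwise the argument is direct, with no real obstacle.
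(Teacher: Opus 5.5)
Your proof is correct and follows essentially the same route as the paper: complementing every digit gives the expansion of $1-x$, which yields (C2) for all $n$, and $a_n(1-x)=-a_n(x)$ gives $D_n(1-x)=-D_n(x)$ and hence (C1); you also justify two points the paper leaves implicit, namely that $x\notin\B_r$ excludes $r^{n-1}x \bmod 1\in\{0,\tfrac12\}$ and excludes non-unique expansions. One wording slip: what you need (and correctly state later) is that the expansions are neither eventually $0$ nor eventually $r-1$, not that they have ``infinitely many digits different from both $0$ and $r-1$'', which is impossible for $r=2$.
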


\begin{proof}
Fix $x$ and assume that $x=0.\vep_1 \cdots \vep_n \cdots$ and
$1-x=0.\eta_1 \cdots \eta_n \cdots$.
Here $x \notin \B_r$ guarantees that these two expansions are infinite.
Then for each $n \in \Z^+$,
$$
	\vep_n =r-1-\eta_n.
$$
Thus condition (C2) holds automatically. 
Moreover, $a_n(1-x)=-a_n(x)$. 
This implies that $D_n(1-x)=-D_n(x)$.
Hence condition (C1) follows.
\end{proof}

\begin{lem}\label{lem:lls-well-def}
Let $x,x'\in(0,1)$.  If $x\sim_r x'$, then we have $T_r(x)=T_r(x')$.
\end{lem}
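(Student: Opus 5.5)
The plan is to derive an explicit series formula for $T_r$ in terms of the digits $\vep_k$ and the partial sums $D_k$, and then check that each term of this series is invariant under $\sim_r$.

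\textbf{Step 1: a digit formula for $T_r$.} For $x\in(0,1)$ with its chosen expansion $x=0.\vep_1\vep_2\cdots$, and for each $n\ge1$, we have
$$
r^{n-1}x \bmod 1=\sum_{k\ge n}\vep_k r^{n-1-k}.
$$
This holds also for finite expansions. By the definition of $a_n$,
$$
\phi(r^{n-1}x)=\tfrac{1-a_n(x)}{2}+a_n(x)\,(r^{n-1}x\bmod 1).
$$
Substitute this into \eqref{eq:def-tr} and exchange the order of summation. The exchange is justified because $|D_k|\le k$, so the double series converges absolutely. This gives
$$
T_r(x)=\sum_{n\ge1}\frac{1-a_n(x)}{2r^{n-1}}+\sum_{k\ge1}\frac{\vep_k D_k(x)}{r^k}.
$$
Now center the digits by setting $\delta_k:=\vep_k-\frac{r-1}{2}$. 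The part $\frac{r-1}{2}\sum_k D_k r^{-k}$ equals $\frac12\sum_n a_n r^{-(n-1)}$, because $\sum_{k\ge n}r^{-k}=\frac{r^{-(n-1)}}{r-1}$. It therefore cancels the $a_n$-part of the first sum, and we obtain
$$
T_r(x)=\frac{r}{2(r-1)}+\sum_{k\ge1}\frac{\delta_k(x)\,D_k(x)}{r^k}.
$$
It then suffices to show that $\delta_k(x)D_k(x)=\delta_k(x')D_k(x')$ for every $k$ whenever $x\sim_r x'$.

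\textbf{Step 2: sign bookkeeping.} If $D_k(x)=0$, then by (C1) also $D_k(x')=0$, and both terms vanish. Otherwise, (C2) says exactly that $|\delta_k(x')|=|\delta_k(x)|$, so it remains to match signs. Call a maximal interval of indices on which $D\neq0$ a \emph{run}.

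First, on a run the ratio $s:=D_k(x')/D_k(x)\in\{\pm1\}$ is constant. Consecutive values of $D$ differ by exactly $1$, so $D$ cannot jump from a nonzero $m$ to $-m$, which would be a jump of $2|m|\ge2$. Consequently $a_k(x')=s\,a_k(x)$ throughout the run. At the first index $k$ of a run this holds because $D_{k-1}=0$, so $a_k=D_k$. At later indices it holds by taking differences of consecutive $D$'s.

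Second, the definition of $a_k$ forces a sign relation with the digit: $a_k=1$ implies $\vep_k\le\frac{r-1}{2}$, i.e. $\delta_k\le0$, and $a_k=-1$ implies $\delta_k\ge0$. Hence when $\delta_k\neq0$ we have $\operatorname{sign}\delta_k=-a_k$ for both $x$ and $x'$. Combined with $|\delta_k(x')|=|\delta_k(x)|$, this gives $\delta_k(x')=s\,\delta_k(x)$. Therefore
$$
\delta_k(x')D_k(x')=s^2\,\delta_k(x)D_k(x)=\delta_k(x)D_k(x),
$$
which finishes the argument.

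\textbf{Main obstacle.} I expect the main obstacle to be this sign-matching step. Condition (C2) alone permits the "wrong" pairing of signs, and it is the run argument, together with the link between $a_k$ and $\operatorname{sign}\delta_k$, that excludes it. The case of odd $r$ with the middle digit $\vep_k=\frac{r-1}{2}$ causes no trouble, because then $\delta_k=0$ and the term vanishes. A minor secondary point is to confirm that the digit formula is valid for points of $\B_r$ under the finite-expansion convention.
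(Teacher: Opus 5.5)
Your proof is correct and is essentially the paper's argument. It uses the same centered-digit series $T_r(x)=\frac{r}{2(r-1)}+\sum_{k}\delta_k D_k r^{-k}$, compared term by term via (C2) and the sign relation $a_k\delta_k=-|\delta_k|$. The one difference is the sign matching: the paper bypasses your run argument by getting $a_n(x)D_n(x)=a_n(x')D_n(x')$ in one line from (C1) and the identity $2a_nD_n=D_n^2-D_{n-1}^2+1$. Also, in your version the precise reason $s$ is constant on a run is that a walk with steps $\pm1$ cannot change sign without passing through $0$, not that it cannot jump from $m$ to $-m$.
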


\begin{proof}
Fix $x\in(0,1)$ with $r$-based expansion $x=0.\vep_1 \vep_2 \cdots$.
Then for any $n \in \Z^+$,
\begin{equation}\label{eq:cal-phi-n}
 \frac{\phi(r^{n-1}x)}{r^{n-1}} 
 = \frac{1}{r^{n-1}} \Big( \frac{1-a_n(x)}{2} + a_n(x)\sum_{k=n}^{\infty}\frac{\vep_k}{r^{k-n+1}} \Big)
 = \frac{1-a_n(x)}{2 r^{n-1}} + \sum_{k=n}^{\infty}\frac{  a_n(x) \vep_k}{r^k}   .
\end{equation}
Since $a_n(x) = D_n(x) - D_{n-1}(x)$,
we have
$$
 \sum_{n=1}^{\infty}\frac{1 - a_n(x)}{r^{n-1}}
 = \sum_{n=1}^{\infty}  \frac{1 + D_{n-1}(x) - D_n(x)}{r^{n-1}}
 = \frac{r}{r-1} + (1-r)\sum_{n=1}^{\infty}\frac{D_n(x)}{r^n}.
$$
Notice that $D_n(x) = \sum_{i=1}^n a_i(x)$.
$$
 \sum_{n=1}^{\infty}\sum_{k=n}^{\infty} \frac{a_n(x)\vep_k}{r^k}
=  \sum_{k=1}^{\infty} \frac{\vep_k}{r^k} \sum_{n=1}^{k} a_n(x)
 = \sum_{n=1}^{\infty}\frac{\vep_nD_n(x)}{r^n}.
$$
Summing \eqref{eq:cal-phi-n} over $n$, we have
\begin{equation}\label{eq:cal-tr}
 T_r(x) = \frac{r}{2(r-1)} + \sum_{n=1}^{\infty} \frac{\big(\vep_n-(r-1)/2\big)D_n(x)}{r^n}.
\end{equation}
Moreover, \eqref{eq:cal-tr} also holds for $x'$.
Using $a_n^2=1$, we obtain
$$
 2 a_n(x) D_n(x)  
 = a_n(x) \big( D_n(x) + D_{n-1}(x) \big) +a_n(x)^2
 = D_n(x)^2-D_{n-1}(x)^2+1.
$$
Condition (C1) gives $D_n(x)^2 = D_n(x')^2$,
therefore $a_n(x)D_n(x)=a_n(x')D_n(x')$ for every $n\in\Z^+$.

Moreover, if $D_n(x)\neq0$, condition (C2) gives
$$
 a_n(x)\Big(\vep_n-\frac{r-1}{2}\Big)
 = -\Big|\vep_n-\frac{r-1}{2}\Big|
=-\Big|\vep_n'-\frac{r-1}{2}\Big|
= a_n(x')\Big(\vep_n'-\frac{r-1}{2}\Big) .
$$
Combining this with $a_n(x)D_n(x)=a_n(x')D_n(x')$ and $a_n(x)^2=a_n(x')^2=1$, we have
$$
 \Big(\vep_n-\frac{r-1}{2}\Big)D_n(x)
 =
 \Big(\vep_n'-\frac{r-1}{2}\Big)D_n(x').
$$
If $D_n(x)=0$, then $D_n(x')=0$ by condition (C1), so the same
equality also holds.
Thus the series in \eqref{eq:cal-tr} agree term by term for $x$ and $x'$.
Therefore $T_r(x)=T_r(x')$.
\end{proof}

Since $\sim_r$ is an equivalence relation, the equivalence class of $x$ is given by
$$
 L_{r,x}^{\rm loc}:= \{x'\in(0,1):x'\sim_r x\},
$$
which is called the \emph{local level set} of $x$. By Lemma~\ref{lem:lls-well-def}, each local level set is contained in a level set of $T_r$. For every $y\in\R$, we count these sets by
$$
 N_r^{\rm loc}(y) :=\card \big( \big\{L_{r,x}^{\rm loc}:x\in(0,1),\ T_r(x)=y \big\} \big).
$$
In this paper, we are going to study the property of $N_r^{\rm loc}(y)$.

\begin{rem}

For $r=2$, condition (C2) is automatic, and the definition agrees with that of Lagarias and Maddock~\cite{LM12}. 
If $r$ is odd, the central digit cylinder crosses $1/2$, so its digit does not by itself determine $a_n$. 
This is the reason for introducing half-intervals.

\end{rem}

\subsection{Word addresses}

Put $I=[0,1/2]$. For each $n\geq0$, we define
$$
 \I_n:= \Big\{ \Big[ \frac{j-1}{2r^n},\frac{j}{2r^n} \Big] : 1 \leq j \leq r^n \Big\}.
$$
Thus $\I_n$ is a partition of $I$.

For every $0\leq k<r$, define the function $S_k:I\to I$ by
\begin{equation}\label{eq:def-sk}
 S_k(u):=\frac{ \lceil \frac{k}{2} \rceil + (-1)^k u}{r}, \qquad u \in I.
\end{equation}
Thus, $S_k(I)=[k/(2r),(k+1)/(2r)]$.

Write $\Sigma:=\{0,1,\ldots,r-1\}$.
Let $\Sigma^n$ denote the set of all the words of length $n$, where $\Sigma^0:=\{\vartheta\}$
and $\vartheta$ is the empty word by convention.
Let $\Sigma^*:=\bigcup_{n=0}^\infty\Sigma^n$ be the set of finite words.
For every $n \in \Z^+$ and $\bfw=w_1\cdots w_n\in\Sigma^n$, we set
$$
 S_{\bfw}:=S_{w_1}\circ\cdots\circ S_{w_n}, \qquad I_{\bfw}:=S_{\bfw}(I).
$$
Therefore $I_{\bfw w} \subset I_\bfw$ for all $w \in \Sigma$.
Here we use the convention $S_{\vartheta}:=\operatorname{id}_I$ and $I_{\vartheta}:=I$. 
Moreover, the map $\bfw\to I_{\bfw}$ is a bijection from $\Sigma^n$ onto $\I_n$.

For any $x\in I\setminus\B_r$, $x$ is not the endpoint of $I_\bfa$ for any $ \bfa \in \Sigma^*$.
Then for any $n \in \Z^+$, there exists a unique $\bfw \in \Sigma^n$ such that $x \in I_\bfw$.
As a result, we write its unique \emph{word address} by an infinite word $ \bfw(x)=w_1\cdots w_n\cdots$,
where $x\in I_{w_1\cdots w_n}$ for every $n\in \Z^+$.


\begin{lem}\label{lem:word-letter}
Let $x\in I\setminus\B_r$ have $r$-based expansion $x= 0. \vep_1 \cdots \vep_n \cdots$ and word address $\bfw(x)=w_1\cdots w_n\cdots$.
Then for every $n\in \Z^+$, we have $(-1)^{w_n}=a_n(x)a_{n+1}(x)$.
 Moreover,
\begin{equation}\label{eq:word-letter}
 \lfloor  w_n/2 \rfloor  =\min\{\vep_n ,r-1-\vep_n \}.
\end{equation}
\end{lem}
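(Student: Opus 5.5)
We prove both identities by tracking how the word address of $x$ is produced from its $r$-based expansion through the folding map $u\mapsto \phi(ru)$ on $I$. The key observation is that the maps $S_k$ are the inverse branches of the map $F:I\to I$, $F(u)=\phi(ru)$. Indeed, on $S_k(I)=[k/(2r),(k+1)/(2r)]$ we have $ru\in[k/2,(k+1)/2]$, so $\phi(ru)=ru-\lceil k/2\rceil$ when $k$ is even and $\phi(ru)=\lceil k/2\rceil-ru$ when $k$ is odd. In both cases $S_k(F(u))=u$.

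Consequently, for $x\in I\setminus\B_r$ with $\bfw(x)=w_1w_2\cdots$, induction gives $F^{n-1}(x)\in I_{w_n w_{n+1}\cdots}$. That is, $w_n$ is the unique $k$ with
$$F^{n-1}(x)\in\Big(\frac{k}{2r},\frac{k+1}{2r}\Big).$$
Since $x\notin\B_r$, all these iterates avoid the endpoints, so no boundary ambiguity arises.

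Next we identify $F^{n-1}(x)$ in terms of the expansion. Write $t_n:=r^{n-1}x \bmod 1=0.\vep_n\vep_{n+1}\cdots$. By definition of $a_n$, $\phi(t_n)=t_n$ if $a_n(x)=1$ and $\phi(t_n)=1-t_n$ if $a_n(x)=-1$. Because $\phi$ is even and $1$-periodic, and $r$ is an integer, we have $\phi(r\phi(t))=\phi(rt)$. Induction then gives $F^{n-1}(x)=\phi(t_n)$. Here $x\in I$ ensures $a_1=1$ and $F^0(x)=x=t_1$.

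Now set $u=\phi(t_n)\in(0,1/2)$ and $k=w_n$, so $ru\in(k/2,(k+1)/2)$.

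\textbf{Digit identity.} If $a_n=1$, then $u=t_n$ and $\lfloor ru\rfloor=\vep_n$. If $a_n=-1$, then $u=1-t_n$, and $\lfloor ru\rfloor=r-1-\vep_n$ because $rt_n\notin\Z$. In either case $\lfloor k/2\rfloor=\lfloor ru\rfloor$ equals $\vep_n$ or $r-1-\vep_n$. It remains to show this is the minimum of the two. We have $\lfloor ru\rfloor\le ru<r/2$, while the two candidates sum to $r-1$.
\begin{itemize}
\item If they are unequal, the larger one is at least $r/2$, so the value $\lfloor ru\rfloor<r/2$ must be the smaller.
\item If they are equal ($r$ odd, central digit), there is nothing to prove.
\end{itemize}
This gives \eqref{eq:word-letter}.

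\textbf{Sign identity.} The parity of $k$ records whether $ru \bmod 1$ lies in $[0,1/2)$ or $[1/2,1)$: $k$ is even exactly when $\{ru\}<1/2$. On the other hand, $a_{n+1}$ is determined by $\{rt_n\}=t_{n+1}$.
\begin{itemize}
\item If $a_n=1$, then $ru=rt_n$, so $\{ru\}=t_{n+1}$ and $(-1)^k=a_{n+1}=a_na_{n+1}$.
\item If $a_n=-1$, then $ru=r-rt_n$, so $\{ru\}=1-t_{n+1}$. Since $t_{n+1}\ne 1/2$ (as $x\notin\B_r$), the half is flipped, giving $(-1)^k=-a_{n+1}=a_na_{n+1}$.
\end{itemize}

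The main obstacle is the bookkeeping at boundaries: ensuring $rt_n\notin\Z$, $t_{n+1}\neq 1/2$, and that the chosen finite expansion causes no mismatch. All of these follow from $x\notin\B_r$, because every such boundary value places $x$ at a point $j/(2r^m)$.
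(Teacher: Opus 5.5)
Your proposal is correct and follows essentially the paper's route: you identify $\phi(r^{n-1}x)=\min\{t_n,1-t_n\}$ with $S_{w_1\cdots w_{n-1}}^{-1}(x)$ using the inverse-branch property of the $S_k$, and then read off $w_n$ from the position of this point in $I$, splitting into cases on $a_n$. The only difference is cosmetic: you obtain $\lfloor w_n/2\rfloor=\lfloor ru\rfloor$ and the parity of $w_n$ (via $\{ru\}$) separately, whereas the paper writes $w_n=\lfloor 2ru_n\rfloor$ explicitly as $2\vep_n+(1-a_{n+1})/2$ or $2(r-1-\vep_n)+(1+a_{n+1})/2$ and reads off both identities from that formula.
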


\begin{proof}
For each $n\in\Z^+$, write $u_{n}:=\dist( r^{n-1} x, \Z) $ and $t_n:=0.\vep_n\vep_{n+1}\cdots$.
Thus
$$u_{n} =\dist( r^{n-1} x, \Z) = \dist (t_n, \Z) = \min\{t_n,1-t_n\}. $$ 
For every $k\in\Sigma$ and $v\in I$, we have $\dist ( r S_k (v),\Z)=v$ by definition.
Combining with the word address of $x$, we obtain
$u_n=S_{w_1\cdots w_{n-1}}^{-1}(x)$.
Hence
$u_n\in S_{w_n}(I)$ and $w_n=\lfloor2ru_n\rfloor$.
If $ t_n<1/2 $, then we have $a_n(x)=1$ and
$$
 w_n= \lfloor  2r u_n \rfloor  
 = \lfloor  2\vep_n +  2 t_{n+1} \rfloor 
 =  2\vep_n + \frac{1-a_{n+1}(x)}{2}  =  2\vep_n + \frac{1-a_{n+1}(x) a_{n}(x)}{2} .
$$
If $ t_n> 1/2 $, then we have $a_n(x)=-1$ and $\vep_n \geq r- 1-\vep_n$.
Hence
$$
 w_n= 
 \lfloor  2r u_n \rfloor  
 =  2(r- 1-\vep_n )+\lfloor 2 -2 t_{n+1}  \rfloor   =   2(r- 1-\vep_n )+\frac{1 +a_{n+1}(x) }{2} .
$$
In both cases, $w_n$ is even if and only if $ a_n(x)=a_{n+1}(x) $.
This implies that $(-1)^{w_n}=a_n(x)a_{n+1}(x)$. 
Moreover, the two formulas also give
$$
 \Big\lfloor \frac{w_n}{2} \Big\rfloor
= \Big\lfloor \min\{\vep_n,r-1-\vep_n\} + \frac{1-a_{n+1}(x) a_{n}(x)}{4} \Big\rfloor
 =\min\{\vep_n,r-1-\vep_n\}.
$$
This completes the proof.
\end{proof}

From Lemma~\ref{lem:word-letter}, condition (C2) is equivalent to $\lfloor w_n/2 \rfloor=\lfloor w'_n / 2\rfloor$ if $D_n(x)\neq 0$.

The affine relations between the graph of $T_r$ and its parts motivate the following auxiliary functions. For each $s\in\Z$, define
\begin{equation}\label{eq:def-fs}
 F_s(x):=T_r(x)+sx, \qquad x\in[0,1].
\end{equation}

\begin{lem}\label{lem:relation-fs}
Fix $s\in\Z$ and $0\leq k<r$.
Then for any $u\in I$, 
\begin{equation}\label{eq:relation-fs}
r  F_s(S_k(u))=
 \begin{cases}
 	 \frac{k(s+1)}{2}+ F_{s+1}(u) , \quad &\mbox{if }  k \mbox{ is even}, \\
	\frac{(k+1)(s+1)}{2}+ F_{-s-1}(u) , \quad  &\mbox{if }  k \mbox{ is odd}.
 \end{cases}
\end{equation}
\end{lem}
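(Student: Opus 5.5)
The plan is a direct computation from the self-affinity of $T_r$. Three elementary facts do all the work.

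\emph{Step 1: the functional equation.} Since $r^n$ is an integer for every $n\geq 0$ and $\phi$ is $1$-periodic, shifting the summation index in \eqref{eq:def-tr} gives
$$
T_r(x)=\phi(x)+\frac{1}{r}T_r(rx),
$$
where $T_r$ is regarded as the $1$-periodic function defined by the same series on all of $\R$.

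\emph{Step 2: symmetry.} Because $\phi$ is even and $1$-periodic, so that $\phi(r^n(1-u))=\phi(r^n u)$ for each $n$, we get $T_r(1-u)=T_r(u)$ and $T_r(-u)=T_r(u)$.

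\emph{Step 3: evaluate at $x=S_k(u)$.} By \eqref{eq:def-sk}, $x\in S_k(I)=[k/(2r),(k+1)/(2r)]\subset[0,1/2]$, so $\phi(x)=x$. Also
$$
rx=\lceil k/2\rceil+(-1)^k u .
$$
If $k$ is even, then $rx\equiv u \pmod 1$. If $k$ is odd, then $rx=(k+1)/2-u\equiv -u\pmod 1$. In either case periodicity and Step 2 give $T_r(rx)=T_r(u)$. Plugging this into Step 1 yields
$$
rT_r(S_k(u))=rS_k(u)+T_r(u).
$$

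\emph{Step 4: add the linear term.} By \eqref{eq:def-fs},
$$
rF_s(S_k(u))=rT_r(S_k(u))+s\,rS_k(u)=T_r(u)+(s+1)\big(\lceil k/2\rceil+(-1)^k u\big).
$$
The proof then finishes by splitting on the parity of $k$.
\begin{itemize}
\item For even $k$, we have $\lceil k/2\rceil=k/2$. The right-hand side becomes $\frac{k(s+1)}{2}+T_r(u)+(s+1)u=\frac{k(s+1)}{2}+F_{s+1}(u)$.
\item For odd $k$, we have $\lceil k/2\rceil=(k+1)/2$. The right-hand side becomes $\frac{(k+1)(s+1)}{2}+T_r(u)-(s+1)u=\frac{(k+1)(s+1)}{2}+F_{-s-1}(u)$.
\end{itemize}
This is exactly \eqref{eq:relation-fs}.

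There is no real obstacle here. The only points needing care are that $\phi(x)=x$ on the whole of $S_k(I)$, and that the odd case uses the reflection $u\mapsto 1-u$ rather than a plain translation.
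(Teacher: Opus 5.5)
Your proposal is correct and follows essentially the same route as the paper: both peel off the $n=0$ term using $\phi(S_k(u))=S_k(u)$ and use the periodicity and evenness of $\phi$ to get $rT_r(S_k(u))=rS_k(u)+T_r(u)$, then add $s\,rS_k(u)$ and split on the parity of $k$. You state explicitly two facts the paper uses silently, namely $S_k(I)\subset[0,1/2]$ and $T_r(-u)=T_r(u)$ for the odd case.
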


\begin{proof}
In this proof, write $t=\lceil k /2 \rceil$, so we have $rS_k(u) = t + (-1)^k u$.
By \eqref{eq:def-tr}, 
		$$
		T_r \big(S_k(u) \big)
		= \sum_{n=0}^{\infty}\frac{\phi(r^n S_k(u))}{r^n}
		=S_k(u)+ \frac{1}{r}\sum_{n=0}^{\infty}\frac{\phi(r^n( t + (-1)^k u))}{r^n}=S_k(u)+\frac{1}{r}T_r ( u ).
		$$
		By \eqref{eq:def-fs}, we obtain
$$
 rF_s \big( S_k(u) \big)
 =r(s+1)S_k(u)+T_r(u)
 =(s+1) t
   +(-1)^k(s+1)u+T_r(u).
$$
Notice that $T_r(u)+(s+1)u=F_{s+1}(u)$ and $T_r(u)-(s+1)u=F_{-s-1}(u)$.
Taking even $k$ gives the first case, while taking odd $k$ gives the second case.
\end{proof}

For $s=-1$, the two cases in \eqref{eq:relation-fs} reduces to
\begin{equation}\label{eq:cal-fs-0}
r F_{-1} \big( S_k(u) \big) = F_0(u), \qquad \forall 0\leq k<r.
\end{equation}
We remark that $s=-1$ is an important case in Lemma~\ref{lem:relation-fs}.

For any $s\in\Z$, we write $ \sigma_s(\vartheta):= s $. 
For any $\bfw \in \Sigma^*$ and $w \in \Sigma$, we define 
$$
 \sigma_s( \bfw w) := (-1)^{w}(\sigma_s( \bfw )+1).
$$
Then $\sigma_s$ keeps track of the parameter in $\{F_s\}_{s\in\Z}$, and \eqref{eq:relation-fs} is equivalent to
$$
r F_s(S_k(u))  = r F_s(S_k(0)) + F_{\sigma_s(k)}(u).
$$

Let $s=0$ and $x\in I\setminus\B_r$ with word address $\bfw(x)=w_1w_2\cdots$. 
Then we have
\begin{equation}\label{eq:sigma-ad}
 \sigma_0(w_1\cdots w_n)=a_{n+1}(x)D_n(x),
 \qquad n\in\Z^+.
\end{equation}
For $n=0$ we have $\sigma_0(\vartheta)=D_0(x)=0$. If \eqref{eq:sigma-ad} holds at $n-1$, then
Lemma~\ref{lem:word-letter} gives
$$
 \sigma_0(w_1\cdots w_n)
 = a_n(x)a_{n+1}(x)\big(a_n(x)D_{n-1}(x)+1\big)
 = a_{n+1}(x) \big(D_{n-1}(x)+ a_n(x)\big).
$$
Thus \eqref{eq:sigma-ad} follows by induction.

\begin{lem}\label{lem:cal-fs-relation}
For every $s\in\Z$, $n\in\Z^+$, $\bfw\in\Sigma^n$, and $u\in I$,
\begin{equation}\label{eq:cal-fs-relation}
 F_s(S_{\bfw}(u)) = F_s(S_{\bfw}(0)) + r^{-n}F_{\sigma_s(\bfw)}(u).
\end{equation}
\end{lem}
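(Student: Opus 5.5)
We argue by induction on $n$, using Lemma~\ref{lem:relation-fs} in the equivalent form
$$
 rF_s(S_k(u)) = rF_s(S_k(0)) + F_{\sigma_s(k)}(u),
$$
which holds for every $s\in\Z$, every $k\in\Sigma$ and every $u\in I$. For $n=1$ and $\bfw=w_1$, we divide this by $r$. This gives $F_s(S_{w_1}(u)) = F_s(S_{w_1}(0)) + r^{-1}F_{\sigma_s(w_1)}(u)$, which is \eqref{eq:cal-fs-relation} with $n=1$.

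Before the inductive step we record a composition rule for $\sigma$. For $\bfw\in\Sigma^n$ and $w\in\Sigma$ we have $\sigma_s(\bfw w)=\sigma_{\sigma_s(\bfw)}(w)$. Indeed, $\sigma_t(w)=(-1)^w(t+1)$ by the definition with $\sigma_t(\vartheta)=t$, so taking $t=\sigma_s(\bfw)$ gives exactly $(-1)^w(\sigma_s(\bfw)+1)=\sigma_s(\bfw w)$.

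For the inductive step, assume \eqref{eq:cal-fs-relation} holds for all $s\in\Z$ and all words of length $n$. Take $\bfw w\in\Sigma^{n+1}$ and $u\in I$, and write $S_{\bfw w}(u)=S_{\bfw}(S_w(u))$ with $S_w(u)\in I$. The induction hypothesis applied at the point $S_w(u)$ gives
$$
 F_s(S_{\bfw w}(u)) = F_s(S_{\bfw}(0)) + r^{-n}F_{t}(S_w(u)), \qquad t:=\sigma_s(\bfw).
$$
The case $n=1$ with parameter $t$ then gives $F_t(S_w(u)) = F_t(S_w(0)) + r^{-1}F_{\sigma_t(w)}(u)$, and by the composition rule $\sigma_t(w)=\sigma_s(\bfw w)$. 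Substituting, we obtain
$$
 F_s(S_{\bfw w}(u)) = F_s(S_{\bfw}(0)) + r^{-n}F_t(S_w(0)) + r^{-n-1}F_{\sigma_s(\bfw w)}(u).
$$

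It remains to identify the constant $F_s(S_{\bfw}(0)) + r^{-n}F_t(S_w(0))$ with $F_s(S_{\bfw w}(0))$. We evaluate the displayed identity at $u=0$. Since $T_r(0)=0$, we have $F_{\sigma}(0)=0$ for every $\sigma$, so the last term vanishes. The identity at $u=0$ therefore reads $F_s(S_{\bfw w}(0)) = F_s(S_{\bfw}(0)) + r^{-n}F_t(S_w(0))$, and this completes the induction. Nothing here is a genuine obstacle. The only points needing care are the composition rule for $\sigma$ and the fact that $F_\sigma(0)=0$, which removes any need to compute the constant explicitly.
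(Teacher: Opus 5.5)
Your proof is correct and follows the paper's argument: induction on $n$, applying the hypothesis at the point $S_w(u)$ and then Lemma~\ref{lem:relation-fs} with parameter $\sigma_s(\bfw)$. The paper leaves implicit the identification of the constant $F_s(S_{\bfw}(0))+r^{-n}F_{\sigma_s(\bfw)}(S_w(0))$ with $F_s(S_{\bfw w}(0))$. Your argument fills this in correctly by setting $u=0$ and using $F_\sigma(0)=0$; equivalently, one can apply the induction hypothesis at the point $S_w(0)$.
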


\begin{proof}
We prove by induction on $n$.  For $n=1$, 
Lemma~\ref{lem:relation-fs} gives that for any $k\in\Sigma$,
$$
 F_s(S_k(u))  = F_s(S_k(0)) + \frac{1}{r}F_{\sigma_s(k)}(u).
$$

Assume that \eqref{eq:cal-fs-relation} holds for $\bfw\in\Sigma^n$. 
Then for $k\in\Sigma$ we obtain
\begin{align*}
 F_s(S_{\bfw k}(u))
 &= F_s(S_{\bfw}(0)) + r^{-n}F_{\sigma_s(\bfw)}(S_k(u))\\
 &= F_s(S_{\bfw}(0)) +  r^{-n}F_{\sigma_s(\bfw)}(S_k(0)) + r^{-n-1}F_{\sigma_s(\bfw k)}(u)\\
 &  = F_s(S_{\bfw k}(0))  + r^{-n-1}F_{\sigma_s(\bfw k)}(u).
\end{align*}
The induction is complete.
\end{proof}

Since $T_r=F_0$, Lemma~\ref{lem:cal-fs-relation} gives that
$$
 T_r(S_{\bfw}(u)) = T_r (S_{\bfw}(0)) +r^{-n}F_{\sigma_0(\bfw)}(u) , \qquad \forall u \in I.
$$

\section{Word classes and local level sets}

We now express local equivalence in terms of finite word addresses.

\subsection{Word classes and local equivalence}

Fix $s\in\Z$ and $n\in\Z^+$.
For any $\bfw=w_1\cdots w_n$ and $\bfk=k_1\cdots k_n$ in $\Sigma^n$,
we write $\bfw\equiv_s\bfk$
if the following conditions hold for every $1 \leq i \leq n$,
\begin{enumerate}
\item[(C3)] $\sigma_s(w_1\cdots w_i) = \sigma_s(k_1\cdots k_i),$
\item[(C4)]
if $\sigma_s(w_1\cdots w_{i-1})\neq -1$, then $w_i=k_i$. Here we use $ \sigma_s(\vartheta) =s$ for $i=1$.
\end{enumerate}
Therefore the relation $\equiv_s$ is an equivalence relation on $\Sigma^n$. We denote
the collection of its equivalence classes by
$$
\C_{s,n}:=\Sigma^n/\equiv_s .
$$
An element $C\in\C_{s,n}$ is called a word class of level $n$.
Since $\sigma_s(\bfw) = \sigma_s(\bfk)$ for all $ \bfw,\bfk \in C $, we define
$
\sigma_s(C):=\sigma_s(\bfw).
$
We set $\C_{s,0}:=\{\{\vartheta\}\}$ by convention.

\begin{lem}\label{lem:class-extension}
Fix $s\in\Z$ and $n\in\Z^+$.
Let $E\in\C_{s,n}$.
If $\sigma_s(E)\neq -1$, then $\{\bfw k:\bfw\in E\} \in \C_{s,n+1}$ for every $k\in\Sigma$.
If $\sigma_s(E)=-1$, then $ \{\bfw k:\bfw\in E,\ k\in\Sigma\} \in \C_{s,n+1}$.

\end{lem}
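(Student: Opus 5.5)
The plan is to unwind the definition of $\equiv_s$ on $\Sigma^{n+1}$ and compare it with $\equiv_s$ on $\Sigma^n$. Two words $\bfw w,\bfk k\in\Sigma^{n+1}$ satisfy $\bfw w\equiv_s\bfk k$ exactly when two things hold. First, conditions (C3) and (C4) hold for every $1\le i\le n$, which is the statement $\bfw\equiv_s\bfk$. Second, the conditions at index $i=n+1$ hold:
$$
\sigma_s(\bfw w)=\sigma_s(\bfk k),\qquad \sigma_s(\bfw)\neq-1\ \Longrightarrow\ w=k .
$$
So the equivalence class of $\bfw w$ in $\Sigma^{n+1}$ is the set of all $\bfk k$ with $\bfk\in E$ that also satisfy these two conditions. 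Here $E$ denotes the class of $\bfw$, and on $E$ the value $\sigma_s(\bfk)=\sigma_s(E)$ is constant.

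\emph{Case $\sigma_s(E)\neq-1$.} Condition (C4) at $i=n+1$ forces $k=w$. Then (C3) at $i=n+1$ is automatic, since
$$
\sigma_s(\bfk w)=(-1)^w(\sigma_s(E)+1)=\sigma_s(\bfw w).
$$
Hence the class of $\bfw w$ is exactly $\{\bfk w:\bfk\in E\}$. Taking $\bfw\in E$ and $w=k$ for each fixed $k\in\Sigma$ shows that $\{\bfw k:\bfw\in E\}\in\C_{s,n+1}$.

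\emph{Case $\sigma_s(E)=-1$.} Condition (C4) at $i=n+1$ imposes nothing. For any $\bfk\in E$ and any $k\in\Sigma$ we have
$$
\sigma_s(\bfk k)=(-1)^k(\sigma_s(E)+1)=(-1)^k\cdot 0=0,
$$
so (C3) at $i=n+1$ holds for every pair of extensions. Therefore the class of any $\bfw w$ with $\bfw\in E$ is all of $\{\bfk k:\bfk\in E,\ k\in\Sigma\}$, which is then an element of $\C_{s,n+1}$.

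The only point needing care is the first reduction: the restriction of $\equiv_s$ on $\Sigma^{n+1}$ to the first $n$ letters is exactly $\equiv_s$ on $\Sigma^n$. This holds because (C3) and (C4) at index $i$ involve only the prefixes of length $i$ and $i-1$. I expect no real obstacle beyond stating this cleanly.
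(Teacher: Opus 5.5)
Your proposal is correct and follows essentially the same route as the paper. Both split the conditions defining $\equiv_s$ on $\Sigma^{n+1}$ into those at positions $1,\dots,n$, which place the prefix in $E$, and the condition at position $n+1$, which forces equal last letters when $\sigma_s(E)\neq-1$ and is vacuous when $\sigma_s(E)=-1$, where every extension has $\sigma_s=0$; the paper merely writes this characterization as two separate inclusions.
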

\begin{proof}
Let $\bfw\in E$.  
Suppose first that $\sigma_s(E)\neq-1$, and fix
$k\in\Sigma$.  
If $\bfw'\in E$, then the definition of $\equiv_s$, applied
at the first $n$ positions and then at position $n+1$, gives
$
 \bfw'k\equiv_s\bfw k.
$
Conversely, suppose that $\bfv\ell\equiv_s\bfw k$.  Restricting the defining
conditions to the first $n$ positions gives $\bfv\equiv_s\bfw$, and hence
$\bfv\in E$.  Since $\sigma_s(\bfv)=\sigma_s(\bfw)\neq-1$, the condition at position $n+1$ gives $\ell=k$.  
Therefore
$$
 \{\bfw'k:\bfw'\in E\}
$$
is exactly the equivalence class of $\bfw k$.

Now suppose that $\sigma_s(E)=-1$.  For any $\bfw'\in E$ and
$k,k'\in\Sigma$, the first $n$ positions satisfy the defining conditions,
while at position $n+1$ the last letters are unrestricted and the
corresponding values of $\sigma_s$ are both $0$. 
Hence $ \bfw'k'\equiv_s\bfw k.$
Conversely, if $\bfv\ell\equiv_s\bfw k$, then restriction to the first $n$ positions gives $\bfv\equiv_s\bfw$, and hence
$\bfv\in E$.
Thus
$$
 \{\bfw'k':\bfw'\in E,\ k'\in\Sigma\}
$$
is exactly the equivalence class of $\bfw k$.
\end{proof}

By Lemma~\ref{lem:class-extension}, every
$D\in\C_{s,n+1}$ has a unique class $E\in\C_{s,n}$ containing the
length-$n$ prefixes of the words in $D$.  We call $D$ an
\emph{extension} of $E$.

Lemma~\ref{lem:class-extension} describes word classes by extending words at the last letter.
We next give another description of their structure according to the first letter, which will be useful when applying the branch identities.

\begin{lem}\label{lem:class-prefix}
Let $s\in\Z$ and $n\geq0$. If $s\neq-1$, then 
$$
\C_{s,n+1} =  \big\{ \{ k \bfw:\bfw\in C\} : k \in\Sigma, C\in\C_{(-1)^k (s+1),n}  \big\} .
$$
If $s = -1$, then 
$
\C_{-1,n+1}= \big\{   \{ k \bfw: k \in\Sigma,\ \bfw\in C\} : C\in\C_{0,n} \big\}.
$
\end{lem}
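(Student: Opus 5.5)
We argue directly from the definition of $\equiv_s$ via conditions (C3) and (C4), comparing the first letter and then the remaining $n$ letters. The key identity is
$$
\sigma_s(k\bfw)=\sigma_{\sigma_s(k)}(\bfw), \qquad \sigma_s(k)=(-1)^k(s+1),
$$
valid for every $k\in\Sigma$ and $\bfw\in\Sigma^*$. It follows by induction on the length of $\bfw$, since both sides obey the same recursion $\sigma(\bfv w)=(-1)^w(\sigma(\bfv)+1)$ and agree on the empty word. Consequently, for words $k\bfw$ and $k'\bfv$ of length $n+1$, conditions (C3) and (C4) at positions $2,\dots,n+1$ for the pair $(k\bfw,k'\bfv)$ under $\equiv_s$ are literally conditions (C3) and (C4) at positions $1,\dots,n$ for $(\bfw,\bfv)$, with parameters $\sigma_s(k)$ and $\sigma_s(k')$ respectively. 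These coincide whenever $\sigma_s(k)=\sigma_s(k')$.

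\emph{Case $s\neq-1$.} Condition (C4) at position $1$ (where $\sigma_s(\vartheta)=s\neq-1$) forces $k=k'$. Condition (C3) at position $1$ then holds automatically. So $k\bfw\equiv_s k'\bfv$ holds iff $k=k'$ and $\bfw\equiv_{(-1)^k(s+1)}\bfv$. Hence the class of $k\bfw$ is $\{k\bfv:\bfv\in C\}$, where $C\in\C_{(-1)^k(s+1),n}$ is the class of $\bfw$. Conversely, every such set, for any $k\in\Sigma$ and any $C$, is the class of $k\bfw$ for any $\bfw\in C$. This gives the set equality.

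\emph{Case $s=-1$.} Here $\sigma_{-1}(k)=0$ for every $k$, so (C3) at position $1$ always holds and (C4) at position $1$ imposes nothing. The remaining conditions say exactly $\bfw\equiv_0\bfv$, regardless of $k,k'$. Thus the class of $k\bfw$ is $\{k'\bfv:k'\in\Sigma,\ \bfv\in C\}$ with $C\in\C_{0,n}$ the class of $\bfw$, and every $C$ arises this way.

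The case $n=0$ needs a separate check under the conventions $\C_{s,0}=\{\{\vartheta\}\}$ and $\Sigma^0=\{\vartheta\}$. For $s\neq-1$ the classes of $\Sigma^1$ are singletons, and for $s=-1$ there is a single class $\Sigma$. Both agree with the formulas.

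The only step needing care is the shift identity for $\sigma_s$ together with the index bookkeeping in (C4), where position $i+1$ of $k\bfw$ refers to $\sigma_s(k w_1\cdots w_{i-1})=\sigma_{\sigma_s(k)}(w_1\cdots w_{i-1})$. The rest is routine.
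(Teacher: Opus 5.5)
Your proposal is correct and follows the paper's proof essentially step for step. Both use the shift identity $\sigma_s(k\bfa)=\sigma_{(-1)^k(s+1)}(\bfa)$, then use (C4) at the first position to force $k=k'$ when $s\neq-1$ (and note it imposes nothing when $s=-1$), and reduce the remaining conditions to $\equiv_{(-1)^k(s+1)}$, respectively $\equiv_0$. Your explicit check of the case $n=0$ under the conventions for $\C_{s,0}$ is a small addition that the paper leaves implicit.
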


\begin{proof}
For any $k \in\Sigma$ and $\bfa\in\Sigma^*$, the recursive definition of $\sigma_s$ gives
\begin{equation}\label{eq:sigma-first}
 \sigma_s(k \bfa) =\sigma_{\sigma_s (k)}(\bfa) =\sigma_{(-1)^k(s+1)}(\bfa).
\end{equation}
Now we fix $i,j\in\Sigma$ and $\bfw=w_1\cdots w_n,\bfk =k_1 \cdots k_n \in\Sigma^n$ such that $i\bfw\equiv_s j\bfk$.

First we assume that $s\neq-1$. 
Since $\sigma_s(\vartheta) \neq -1$, the definition of $i\bfw\equiv_s j\bfk$ requires $i=j$.
By \eqref{eq:sigma-first}, for every $0\leq m\leq n$,
$$
 \sigma_{(-1)^i(s+1)}(w_1\cdots w_m)
 =\sigma_s(iw_1\cdots w_m)
 = \sigma_s(j k_1\cdots k_m)
 =\sigma_{(-1)^j (s+1)}(k_1\cdots k_m).
$$
Therefore $i\bfw\equiv_s i\bfk$ if and only if $\bfw\equiv_{(-1)^i(s+1)}\bfk$. 
This proves the first assertion.

If $s=-1$, we have $\sigma_{-1}(i)=\sigma_{-1}(j)=0$ for all $i,j\in\Sigma$.
The remaining
conditions are exactly those defining $\bfw\equiv_0\bfk$, proving
the second assertion.
\end{proof}

\begin{lem}
\label{lem:class-function}
Fix $s\in\Z$ and $n\in\Z^+$.
If $C\in\C_{s,n}$ and $\bfw,\bfw'\in C$, then
$$
 F_s\circ S_{\bfw}=F_s\circ S_{\bfw'}.
$$
\end{lem}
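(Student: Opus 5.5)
We will prove the statement by induction on $n$, for all $s\in\Z$ simultaneously, peeling off the \emph{first} letter via Lemma~\ref{lem:class-prefix} and using the one-step identity \eqref{eq:relation-fs}. By \eqref{eq:relation-fs}, for every $k\in\Sigma$ and $u\in I$,
$$
F_s(S_k(u))=F_s(S_k(0))+r^{-1}F_{\sigma_s(k)}(u),\qquad F_s(S_k(0))=\frac{(s+1)\lceil k/2\rceil}{r},\qquad \sigma_s(k)=(-1)^k(s+1).
$$

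\textbf{Base case $n=1$.} If $s\neq-1$, condition (C4) at $i=1$ forces $w_1=w_1'$, so there is nothing to prove. If $s=-1$, then \eqref{eq:cal-fs-0} gives $F_{-1}(S_k(u))=r^{-1}F_0(u)$ for every $k$. Hence $F_{-1}\circ S_k$ does not depend on $k$.

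\textbf{Inductive step.} Assume the claim holds at level $n$ for all $s$, and let $C\in\C_{s,n+1}$.

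If $s\neq-1$, Lemma~\ref{lem:class-prefix} lets us write the two words as $k\bfw$ and $k\bfw'$ with a common first letter $k$, where $\bfw,\bfw'$ lie in one class $C'\in\C_{(-1)^k(s+1),n}$. Then for $u\in I$,
$$
F_s(S_{k\bfw}(u))=F_s(S_k(0))+r^{-1}F_{\sigma_s(k)}(S_{\bfw}(u)),
$$
and the same identity holds with $\bfw'$ in place of $\bfw$. The induction hypothesis, applied with parameter $\sigma_s(k)=(-1)^k(s+1)$, gives $F_{\sigma_s(k)}\circ S_{\bfw}=F_{\sigma_s(k)}\circ S_{\bfw'}$, so the two right-hand sides agree.

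If $s=-1$, the words are $k\bfw$ and $k'\bfw'$ with $k,k'$ arbitrary and $\bfw,\bfw'\in C'\in\C_{0,n}$. By \eqref{eq:cal-fs-0},
$$
F_{-1}(S_{k\bfw}(u))=r^{-1}F_0(S_{\bfw}(u)),
$$
which is independent of $k$. The induction hypothesis with $s=0$ then identifies $F_0\circ S_{\bfw}$ with $F_0\circ S_{\bfw'}$, and the claim follows.

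\textbf{Main obstacle.} The main point is that the induction must run through the first letter rather than the last. Using Lemma~\ref{lem:class-extension}, the last-letter extension, would also work in principle, via Lemma~\ref{lem:cal-fs-relation}: one would need $F_s(S_{\bfw}(0))=F_s(S_{\bfw'}(0))$, which is itself a consequence of the claim evaluated at $u=0$. That route is circular unless it is organized carefully, whereas the first-letter route avoids the issue entirely.

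The other delicate step is the case $s=-1$, where the first letters genuinely differ. This is handled exactly by the vanishing of $(s+1)$ in \eqref{eq:relation-fs}: both the constant term $F_{-1}(S_k(0))$ and the dependence of the new parameter on the parity of $k$ disappear, leaving $F_0$ in every branch.
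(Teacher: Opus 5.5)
Your proof is correct, but it is organized differently from the paper's.

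The paper fixes $s$ and inducts by appending a \emph{last} letter. If $\bfw w\equiv_s\bfk k$, then the prefixes satisfy $\bfw\equiv_s\bfk$. When $\sigma_s(\bfw)\neq-1$, one has $w=k$, and the proof simply composes $F_s\circ S_{\bfw}=F_s\circ S_{\bfk}$ with $S_w$. When $\sigma_s(\bfw)=-1$, it uses Lemma~\ref{lem:cal-fs-relation} to write $F_s(S_{\bfw w}(u))=F_s(S_{\bfw}(0))+r^{-n}F_{-1}(S_w(u))$, and finishes with \eqref{eq:cal-fs-0}.

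You instead peel off the \emph{first} letter using Lemma~\ref{lem:class-prefix}. The price is that the parameter changes from $s$ to $\sigma_s(k)$, so you must carry the induction hypothesis for all $s$ simultaneously, which you correctly do. The gain is twofold:
\begin{itemize}
\item you only need the one-step identity \eqref{eq:relation-fs}, never the $n$-step Lemma~\ref{lem:cal-fs-relation};
\item an interior position where $\sigma=-1$ is handled automatically, since after peeling it becomes a top-level $s=-1$ case.
\end{itemize}
Your argument also mirrors the first-letter decomposition the paper later uses for the recursion \eqref{eq:cal-asn}.

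One correction to your closing remark: the last-letter route is not circular and needs no special care. The equality $F_s(S_{\bfw}(0))=F_s(S_{\bfk}(0))$ is just the level-$n$ induction hypothesis, for the same $s$ and the length-$n$ prefixes, evaluated at $u=0$. This is exactly how the paper's proof concludes.
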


\begin{proof}
We prove the statement by induction on $n$.  For $n=1$, the result follows directly from the definition of $\equiv_s$ and \eqref{eq:cal-fs-0}.

Assume that the statement holds for words of length $n$.  Let
$\bfw,\bfk\in\Sigma^n$ with $\bfw\equiv_s\bfk$, and suppose that
$\bfw w\equiv_s\bfk k$ for some $w,k\in\Sigma$.  If $\sigma_s(\bfw)\neq-1$, then $w=k$, and the induction hypothesis gives
$$
 F_s\circ S_{\bfw w}=F_s\circ S_{\bfk k}.
$$
If $\sigma_s(\bfw)=-1$, Lemma~\ref{lem:cal-fs-relation} gives
$$
\begin{aligned}
 F_s(S_{\bfw w}(u))
 &=F_s(S_{\bfw}(0))+r^{-n}F_{-1}(S_w(u)),\\
 F_s(S_{\bfk k}(u))
 &=F_s(S_{\bfk}(0))+r^{-n}F_{-1}(S_k(u)).
\end{aligned}
$$
The induction hypothesis and \eqref{eq:cal-fs-0} show that the two expressions are equal.
\end{proof}

Let $x \in I\setminus\B_r$ with $\bfw(x)=w_1w_2\cdots$.
For any $n \in \Z^+$, 
let $C_n(x)$ be the unique class in $\C_{0,n}$ containing $w_1 \cdots w_n$.
By Lemma~\ref{lem:class-extension}, $C_{n+1}(x)$ is an extension of $C_n(x)$.

\begin{lem}\label{lem:class-lls-one}
Fix $x,x' \in I\setminus \B_r$. Then we have $x \sim_r x'$ if and only if
$C_n(x)=C_n(x')$ for every $n\in\Z^+$.
\end{lem}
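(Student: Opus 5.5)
The proof will be a translation of the two pairs of conditions, (C1)–(C2) and (C3)–(C4), into one another. The tools are \eqref{eq:sigma-ad}, namely $\sigma_0(w_1\cdots w_n)=a_{n+1}(x)D_n(x)$, and Lemma~\ref{lem:word-letter}. Write $\bfw(x)=w_1w_2\cdots$ and $\bfw(x')=w_1'w_2'\cdots$. By definition, $C_n(x)=C_n(x')$ for every $n$ means exactly that, for every $i\geq1$, the following two conditions hold:
\begin{enumerate}
\item[(a)] $\sigma_0(w_1\cdots w_i)=\sigma_0(w_1'\cdots w_i')$;
\item[(b)] $w_i=w_i'$ whenever $\sigma_0(w_1\cdots w_{i-1})\neq-1$.
\end{enumerate}
I will show that (a)–(b) for all $i$ is equivalent to (C1)–(C2) for all $n$.

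\emph{From classes to $\sim_r$.} Assume (a) and (b). Taking absolute values in (a) and using \eqref{eq:sigma-ad} with $|a_{i+1}|=1$ gives $|D_i(x)|=|D_i(x')|$, which is (C1). For (C2), suppose $D_n(x)\neq0$. By the remark after Lemma~\ref{lem:word-letter}, it suffices to show $w_n=w_n'$, since then $\lfloor w_n/2\rfloor=\lfloor w_n'/2\rfloor$. By (b), this holds unless $\sigma_0(w_1\cdots w_{n-1})=-1$, so assume that. Then \eqref{eq:sigma-ad} gives $a_n(x)D_{n-1}(x)=-1$, hence $D_n(x)=D_{n-1}(x)+a_n(x)=a_n(x)\bigl(a_n(x)D_{n-1}(x)+1\bigr)=0$. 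This contradicts $D_n(x)\neq0$. So (C2) holds.

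\emph{From $\sim_r$ to classes.} Assume (C1) and (C2). I prove (a) and (b) together by induction on $i$. The base case is $\sigma_0(\vartheta)=0$ for both words.
\begin{itemize}
\item Suppose (a) holds at level $i-1$, with common value $\sigma:=\sigma_0(w_1\cdots w_{i-1})=a_i(x)D_{i-1}(x)=a_i(x')D_{i-1}(x')$.
\item If $\sigma=-1$, then (b) imposes no constraint at step $i$. Moreover both $\sigma_0(w_1\cdots w_i)$ and $\sigma_0(w_1'\cdots w_i')$ equal $(-1)^{w}(\sigma+1)=0$, so (a) holds at level $i$.
\item If $\sigma\neq-1$, then $D_i(x)=a_i(x)(\sigma+1)\neq0$. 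By (C2) and Lemma~\ref{lem:word-letter}, $\lfloor w_i/2\rfloor=\lfloor w_i'/2\rfloor$. It remains to match the parities of $w_i$ and $w_i'$, and (a) at level $i$ will then follow from $\sigma_0(\bfw w)=(-1)^w(\sigma_0(\bfw)+1)$.
\end{itemize}

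\emph{Matching parities (the main obstacle).} The difficulty is that (C1) only sees $|D_i|$, not the signs of the $a_i$. By Lemma~\ref{lem:word-letter}, $(-1)^{w_i}=a_i a_{i+1}$. I would use the identity $2a_iD_i=D_i^2-D_{i-1}^2+1$, as in the proof of Lemma~\ref{lem:lls-well-def}. Together with (C1), it gives $a_iD_i$ equal for $x$ and $x'$ at every $i$. Since $D_i\neq0$ here, this yields
\[
\frac{a_i(x)}{a_i(x')}=\frac{D_i(x')}{D_i(x)}.
\]
Applying the same identity at index $i+1$ controls $a_{i+1}$ relative to $D_{i+1}$, and $D_{i+1}=D_i+a_{i+1}$. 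Writing $D_i(x')=\pm D_i(x)$, one checks that $D_{i+1}(x')=\pm D_{i+1}(x)$ with the same sign. If $D_{i+1}=0$, the ratio is undefined, but then $a_{i+1}=-\operatorname{sgn}D_i$ for both points, which again gives the same sign. Hence $a_ia_{i+1}$ agrees for $x$ and $x'$, so $w_i\equiv w_i'\pmod 2$. Combined with $\lfloor w_i/2\rfloor=\lfloor w_i'/2\rfloor$, this gives $w_i=w_i'$, which is (b), and (a) at level $i$ follows.

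This sign bookkeeping, in particular the degenerate case $D_{i+1}=0$, is where I expect the care to be needed. Everything else is a direct unwinding of definitions.
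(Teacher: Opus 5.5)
Your proof is correct and follows the paper's argument. The direction from classes to $\sim_r$ is identical, and the converse is the same induction split into the cases $\sigma=-1$ and $\sigma\neq-1$, concluding $w_i=w_i'$ from matching parity and $\lfloor w_i/2\rfloor=\lfloor w_i'/2\rfloor$. The only difference is the parity step: the paper writes $|D_{i+1}(x)|=|a_i(x)D_{i+1}(x)|=|\sigma+1+(-1)^{w_i}|$ (and likewise for $x'$) and uses $|\sigma+2|\neq|\sigma|$, which avoids your sign bookkeeping and the degenerate case $D_{i+1}=0$ altogether.
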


\begin{proof}
Assume that $\bfw(x)=w_1w_2\cdots$ and $\bfw(x')=w_1'w_2'\cdots$. 
We set $\bfw_0(x)=\bfw_0(x'):=\vartheta$.
For every $ n \in \Z^+ $, write $\bfw_n(x)=w_1\cdots w_n$ and $\bfw_n(x')=w_1'\cdots w_n'$.

We first record an identity that will be used in both directions.
For each $n \in \Z^+$, we obtain by \eqref{eq:sigma-ad} that
\begin{equation}\label{eq:sigma-d-step}
 \sigma_0(\bfw_{n-1}(x))+1
 =a_n(x)D_{n-1}(x)+1 
 =a_n(x)\big(D_{n-1}(x)+a_n(x)\big) 
 =a_n(x)D_n(x).
\end{equation}
Here we use $a_n(x)^2=1$.
For $n=1$, we use $\bfw_0(x)=\vartheta$ and $D_0(x)=0$.

First we assume that $C_n(x)=C_n(x')$ for every $n\in\Z^+$. Therefore $\bfw_n(x)\equiv_0\bfw_n(x')$.
Hence condition (C3) and \eqref{eq:sigma-ad} give
$$
 |D_n(x)|
 = | a_{n+1}(x) D_n(x) |
 = |\sigma_0(\bfw_n(x))|
 = |\sigma_0(\bfw_n(x'))|
 =  | a_{n+1}(x') D_n(x') |
 = |D_n(x')|.
$$
If $D_n(x) \neq 0 $, \eqref{eq:sigma-d-step} gives $ \sigma_0(\bfw_{n-1}(x))+1 =a_n(x)D_n(x) \neq 0$.
Hence $\sigma_0(\bfw_{n-1}(x))\neq-1$.
Condition (C4) gives $w_n=w_n'$, so condition (C2) follows by \eqref{eq:word-letter}.  
Thus $x\sim_r x'$.

Conversely, we assume that $x \sim_r x'$. 
We prove by induction that $\bfw_n(x)\equiv_0\bfw_n(x')$ holds for every $n \geq 0$.
For $n=0$, we have $\bfw_0(x)= \bfw_0(x') = \vartheta$, so the assertion holds.

Now assume that the induction hypothesis holds at level $n-1$, that is,
$$
 \bfw_{n-1}(x)\equiv_0\bfw_{n-1}(x').
$$
Set $ t:=\sigma_0\big(\bfw_{n-1}(x)\big)$, then $ t  =\sigma_0\big(\bfw_{n-1}(x')\big)$.
Applying \eqref{eq:sigma-d-step} to both $x$ and $x'$ gives
$$
a_n(x) D_n(x) =  \sigma_0\big(\bfw_{n-1}(x)\big) + 1 = t+1  =  \sigma_0 \big(\bfw_{n-1}(x')\big) + 1=a_n(x') D_n(x')   .
$$
If $t =-1$, then $D_n(x)=D_n(x')=0$, and therefore
$$
 	\big| \sigma_0(\bfw_n(x)) \big| =\big| D_n(x) \big| =0 = \big| D_n(x') \big| = \big| \sigma_0(\bfw_n(x'))  \big|.
$$
Thus condition (C3) holds at position $n$. 
Since $ t=-1$, condition (C4) is automatically satisfied.
Together with the induction hypothesis, this gives
$
 \bfw_n(x)\equiv_0\bfw_n(x').
$

If $ t \neq-1$, 
we directly have $ a_n(x) D_n(x) = t +1\neq 0 $ by \eqref{eq:sigma-d-step}.
We next compare the parities of $w_n$ and $w_n'$. 
By Lemma~\ref{lem:word-letter}, $(-1)^{w_n}=a_n(x)a_{n+1}(x)$. 
 Therefore
$$
 |D_{n+1}(x)|
 =|a_n(x)D_{n+1}(x)|
 =|a_n(x)D_n(x)+a_n(x)a_{n+1}(x)|
 =|t+1+(-1)^{w_n}|.
$$
Similarly, $ |D_{n+1}(x')| = |t+1+(-1)^{w_n'}|$.
It follows from condition (C1) that $ |D_{n+1}(x)| =|D_{n+1}(x')|$.
Notice that $|t+2|  \neq |t|$, hence $w_n$ and $w_n'$ share the same parity.
On the other hand, condition (C2) and \eqref{eq:word-letter} give 
$\lfloor{w_n/2}\rfloor = \lfloor{w_n'}/{2}\rfloor$.
This implies that $|w_n-w_n'| \leq 1$, therefore $w_n=w_n'$.
Consequently
$$
 \sigma_0(\bfw_n(x)) = \sigma_0(\bfw_n(x')).
$$
Thus $\bfw_n(x)\equiv_0\bfw_n(x')$, and the induction is complete.
\end{proof}

\subsection{Counting local level sets}

For any $C\in\C_{s,n}$ and $\bfw,\bfw'\in C$, by Lemma~\ref{lem:class-function} the range $F_s(I_{\bfw})$
is independent of the choice of $\bfw\in C$. We denote it by
$$
 \Ra_s(C):=F_s(I_{\bfw})=F_s(S_{\bfw}(I)), \qquad C\in\C_{s,n},
$$
and set $\Ra_s(\{\vartheta\}):=F_s(I)$ for $n=0$.

\begin{lem}\label{lem:class-range}
Fix $s\in\Z$ and $n\in\Z^+$.
Let $E\in\C_{s,n}$.
If $\sigma_s(E)\neq -1$, then
$$
\Ra_s(E) = \bigcup_{k\in\Sigma} \Ra_s(\{\bfw k:\bfw\in E\}).
$$
If $\sigma_s(E)=-1$, then $\Ra_s(\{\bfw k:\bfw\in E,\ k\in\Sigma\}) = \Ra_s(E)$.
\end{lem}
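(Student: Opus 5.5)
Both assertions follow from the fact that $I$ is the union of the $r$ intervals $S_k(I)$, $k\in\Sigma$, together with Lemma~\ref{lem:class-function} and the identity \eqref{eq:cal-fs-0}. Throughout we fix a representative $\bfw\in E$, so that $\Ra_s(E)=F_s(S_{\bfw}(I))$ by Lemma~\ref{lem:class-function}.

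For the case $\sigma_s(E)\neq-1$, Lemma~\ref{lem:class-extension} says that each $\{\bfw' k:\bfw'\in E\}$ is a class in $\C_{s,n+1}$ with representative $\bfw k$. By definition its range is therefore $F_s(S_{\bfw k}(I))=F_s(S_{\bfw}(S_k(I)))$. Since $S_k(I)=[k/(2r),(k+1)/(2r)]$, we have $\bigcup_{k\in\Sigma}S_k(I)=[0,1/2]=I$. Applying the map $F_s\circ S_{\bfw}$, and using that images commute with unions, gives
$$\bigcup_{k\in\Sigma}\Ra_s(\{\bfw' k:\bfw'\in E\})=F_s\Big(S_{\bfw}\Big(\bigcup_{k}S_k(I)\Big)\Big)=F_s(S_{\bfw}(I))=\Ra_s(E).$$

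For the case $\sigma_s(E)=-1$, Lemma~\ref{lem:class-extension} says that $D=\{\bfw' k:\bfw'\in E,\ k\in\Sigma\}$ is a single class in $\C_{s,n+1}$. By Lemma~\ref{lem:class-function}, $F_s\circ S_{\bfw k}$ is the same function for every $k\in\Sigma$, so $\Ra_s(D)=F_s(S_{\bfw k}(I))$ for any fixed $k$. Taking the union over $k$ of these equal sets and arguing as in the first case,
$$\Ra_s(D)=\bigcup_{k\in\Sigma}F_s(S_{\bfw k}(I))=F_s(S_{\bfw}(I))=\Ra_s(E).$$

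This can also be seen directly. By Lemma~\ref{lem:cal-fs-relation}, $F_s(S_{\bfw k}(u))=F_s(S_{\bfw}(0))+r^{-n}F_{-1}(S_k(u))$. By \eqref{eq:cal-fs-0}, $F_{-1}(S_k(u))=r^{-1}F_0(u)$, so the image $F_{-1}(S_k(I))$ does not depend on $k$. Consequently each $F_{-1}(S_k(I))$ equals $\bigcup_{j}F_{-1}(S_j(I))=F_{-1}(I)$.

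The only point needing care is that $\Ra_s$ is well defined on classes, which is exactly what Lemma~\ref{lem:class-function} provides. Beyond that the argument is routine.
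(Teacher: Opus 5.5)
Your proof is correct and follows essentially the paper's argument: write $I_{\bfw}=\bigcup_{k\in\Sigma}I_{\bfw k}$, using Lemma~\ref{lem:class-extension} to identify the extended classes and Lemma~\ref{lem:class-function} to make $\Ra_s$ well defined on them. Your additional direct check in the case $\sigma_s(E)=-1$, via Lemma~\ref{lem:cal-fs-relation} and \eqref{eq:cal-fs-0}, is also valid but not needed.
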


\begin{proof}
Assume first that $\sigma_s(E)\neq-1$.  By Lemma~\ref{lem:class-extension}, for every $k\in\Sigma$,
$\{\bfw k:\bfw\in E\}$ is an element of $\C_{s,n+1}$.  Since
$$
 I_{\bfw}=\bigcup_{k\in\Sigma}I_{\bfw k},
 \qquad \bfw\in E,
$$
we have
$$
 \Ra_s(E)
 =\bigcup_{k\in\Sigma}\Ra_s(\{\bfw k:\bfw\in E\}).
$$

Now assume that $\sigma_s(E)=-1$ and put
$$
 D:=\{\bfw k:\bfw\in E,\ k\in\Sigma\}\in\C_{s,n+1}.
$$
For any $\bfw\in E$, Lemma~\ref{lem:class-function} gives
$$
 \Ra_s(E)
 =F_s(I_{\bfw})
 =\bigcup_{k\in\Sigma}F_s(I_{\bfw k})
 =\Ra_s(D).
$$
\end{proof}

Fix $s\in\Z$ and $n\geq0$.
For $y\in\R$, let $\C_{s,n}(y)$ consist of the classes whose ranges
contain $y$:
$$
 \C_{s,n}(y):=\{C\in\C_{s,n}:y\in\Ra_s(C)\}.
$$
If for every $n\in\Z^+$,
$ E_n\in\C_{0,n}(y)$ and $E_{n+1}$ is an extension of $E_n$,
then $(E_n)_{n\in\Z^+}$ is called a \emph{chain through $y$}.

\begin{lem}\label{lem:class-count-monotonicity}
For every $s\in\Z$, $y\in\R$ and $n\in\Z^+$,
$$
\card(\C_{s,n}(y)) \leq \card(\C_{s,n+1}(y)).
$$
\end{lem}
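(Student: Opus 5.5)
The plan is to build an injection from $\C_{s,n}(y)$ into $\C_{s,n+1}(y)$ by sending each class to one of its extensions, relying on Lemma~\ref{lem:class-extension} and Lemma~\ref{lem:class-range}.

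Fix $E\in\C_{s,n}(y)$, so that $y\in\Ra_s(E)$. We pick an extension $D(E)\in\C_{s,n+1}$ of $E$ as follows.

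\emph{Case $\sigma_s(E)=-1$.} By Lemma~\ref{lem:class-extension}, $D:=\{\bfw k:\bfw\in E,\ k\in\Sigma\}$ belongs to $\C_{s,n+1}$. By Lemma~\ref{lem:class-range}, $\Ra_s(D)=\Ra_s(E)\ni y$. We set $D(E):=D$.

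\emph{Case $\sigma_s(E)\neq-1$.} Lemma~\ref{lem:class-range} gives $\Ra_s(E)=\bigcup_{k\in\Sigma}\Ra_s(\{\bfw k:\bfw\in E\})$. Hence there is some $k\in\Sigma$ with $y\in\Ra_s(\{\bfw k:\bfw\in E\})$. Fix such a $k$, for instance the smallest one. By Lemma~\ref{lem:class-extension}, this set is an element of $\C_{s,n+1}$, and we call it $D(E)$.

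In both cases $D(E)\in\C_{s,n+1}(y)$ and $D(E)$ is an extension of $E$.

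Next we show that $E\mapsto D(E)$ is injective. As noted after Lemma~\ref{lem:class-extension}, each $D\in\C_{s,n+1}$ is an extension of a unique class in $\C_{s,n}$, namely the class containing the length-$n$ prefixes of its words. So if $D(E)=D(E')$, then the prefixes of words in this common class lie in both $E$ and $E'$. Equivalence classes are disjoint, so $E=E'$. This gives $\card(\C_{s,n}(y))\le\card(\C_{s,n+1}(y))$.

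The only step that needs care is the case distinction, specifically checking that in the case $\sigma_s(E)=-1$ the range does not shrink. This is exactly the second statement of Lemma~\ref{lem:class-range}, so I expect no real obstacle. If the cardinalities are infinite, the injection still gives the inequality of cardinals. They are in fact finite, since $\Sigma^n$ is finite.
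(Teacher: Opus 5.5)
Your proof is correct and takes essentially the same route as the paper. The paper defines $\Omega(y,E)$, the set of extensions of $E$ lying in $\C_{s,n+1}(y)$. It shows via Lemma~\ref{lem:class-range} that these sets are nonempty and pairwise disjoint, and then sums over $E$; your injection $E\mapsto D(E)$ is this argument with one element chosen from each $\Omega(y,E)$.
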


\begin{proof}
For each $E\in\C_{s,n}(y)$, we have $y\in\Ra_s(E)$.
Define
$$
 \Omega(y,E) := \big\{ D\in\C_{s,n+1}(y) : D\text{ is an extension of }E \big\}.
$$
If $E\neq F$, then $\Omega(y,E) \cap \Omega(y,F) =\emptyset $.
Then it suffices to show $\Omega(y,E)$ is nonempty.

If $\sigma_s(E)\neq-1$, then Lemma~\ref{lem:class-range} gives
$$
  y \in \Ra_s(E) = \bigcup_{k\in\Sigma}\Ra_s\big(\{\bfw k:\bfw\in E\}\big).
$$
Then there exists $k\in\Sigma$ with $ y \in \Ra_s\big(\{\bfw k:\bfw\in E\}\big)$. Hence $\Omega(y,E)$ is nonempty.

If $\sigma_s(E)=-1$, Lemma~\ref{lem:class-range} gives $\Ra_s(E)=\Ra_s(\{\bfw k:\bfw\in E,\ k\in\Sigma\})$.
Then
$$
 	\Omega(y,E) = \big\{ \{\bfw k:\bfw\in E , k \in \Sigma \}   \big\}.
$$
Therefore
$$
 \card\big(\C_{s,n}(y)\big)
 = \sum_{E\in\C_{s,n}(y)}  1
 \leq \sum_{E\in\C_{s,n}(y)}  \card\big( \Omega(y,E) \big)
 \leq \card\big(\C_{s,n+1}(y)\big).
$$
Now the proof is complete.
\end{proof}



\begin{lem}\label{lem:chain-coding}
For every $y\in\R\setminus T_r(\B_r)$, the chains through $y$ are in
one-to-one correspondence with the local level sets contained in $L_r(y)$.
\end{lem}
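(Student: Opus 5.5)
The plan is to build an explicit map from local level sets in $L_r(y)$ to chains through $y$, and then show it is well defined, injective and surjective. Throughout, fix $y\in\R\setminus T_r(\B_r)$. Every $x\in L_r(y)$ then satisfies $x\notin\B_r$, so in particular $x\in(0,1)$ and $x\neq 1/2$.

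\emph{Reduction to $I$ and the map.} Let $L$ be a local level set contained in $L_r(y)$ and pick $x\in L$. If $x>1/2$, then $1-x\in I\setminus\B_r$, and Lemma~\ref{lem:sim-symmetry} gives $1-x\sim_r x$. Hence every such $L$ contains a point of $I\setminus\B_r$. For such a representative $x$, set $\Phi(L):=(C_n(x))_{n\in\Z^+}$. We check that this is a chain through $y$:
\begin{itemize}
\item $C_{n+1}(x)$ is an extension of $C_n(x)$, as noted after Lemma~\ref{lem:class-function}.
\item $x\in I_{w_1\cdots w_n}$, so $y=T_r(x)=F_0(x)\in F_0(I_{w_1\cdots w_n})=\Ra_0(C_n(x))$, i.e.\ $C_n(x)\in\C_{0,n}(y)$.
\end{itemize}
By Lemma~\ref{lem:class-lls-one}, two representatives $x,x'\in L\cap(I\setminus\B_r)$ satisfy $x\sim_r x'$, so $C_n(x)=C_n(x')$ for all $n$. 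Thus $\Phi$ is well defined. Conversely, if two local level sets have representatives $x,x'\in I\setminus\B_r$ with the same chain, the same lemma gives $x\sim_r x'$, so the sets coincide. Hence $\Phi$ is injective.

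\emph{Surjectivity.} This is the main step. Given a chain $(E_n)$ through $y$, define
$$
K_n:=\bigcup_{\bfw\in E_n}\big(I_{\bfw}\cap F_0^{-1}(y)\big).
$$
Each $K_n$ is a finite union of compact sets, since $F_0=T_r$ is continuous. It is nonempty, because $y\in\Ra_0(E_n)=F_0(I_{\bfw})$ for any $\bfw\in E_n$. The sets are nested: every word of $E_{n+1}$ has its length-$n$ prefix in $E_n$ (by Lemma~\ref{lem:class-extension}), and $I_{\bfw k}\subset I_{\bfw}$, so $K_{n+1}\subset K_n$.

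By Cantor's intersection theorem there is $x\in\bigcap_n K_n$. Then $T_r(x)=y$, so $x\notin\B_r$, and $x$ lies in the interior of exactly one interval of $\I_n$ for each $n$. For each $n$ there is some $\bfw^{(n)}\in E_n$ with $x\in I_{\bfw^{(n)}}$. Uniqueness of the word address then forces $\bfw^{(n)}=w_1\cdots w_n$, where $\bfw(x)=w_1w_2\cdots$. Therefore $C_n(x)=E_n$ for every $n$, and $\Phi(L^{\rm loc}_{r,x})=(E_n)$.

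\emph{Expected obstacle.} The delicate point is the last step: a point $x$ in the intersection might a priori lie in $I_{\bfw}$ for a word $\bfw\in E_n$ that is not the prefix of its own address. This could only happen at shared endpoints of intervals in $\I_n$, and the hypothesis $y\notin T_r(\B_r)$ is exactly what excludes those endpoints. I would also double-check that the representative chosen for $L$ genuinely lies in $(0,1)$, so that $\sim_r$ applies. This holds because $0,1,1/2\in\B_r$.
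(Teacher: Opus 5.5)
Your proof is correct and follows essentially the same route as the paper. For the chain-to-set direction you both use the nested compact sets $K_n$, Cantor's intersection theorem, and the fact that $y\notin T_r(\B_r)$ fixes a unique word address. For the reverse direction and injectivity you both use Lemma~\ref{lem:sim-symmetry} together with Lemma~\ref{lem:class-lls-one}. The only difference is packaging: you present everything as one map $\Phi$ from local level sets to chains that is well defined, injective and surjective, so you do not need the paper's separate check that all points of $\bigcap_n K_n$ are locally equivalent.
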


\begin{proof}
Let $(C_n)_{n\in\Z^+}$ be a chain through $y$.  For each
$n\in\Z^+$, define
$$
 K_n := L_r(y) \cap \bigcup_{\bfw\in C_n}I_{\bfw}.
$$
Since $C_n\in\C_{0,n}(y)$, we have $y\in\Ra_0(C_n)$.
Hence $K_n$ is nonempty.
Since $T_r$ is continuous, $L_r(y)$ is closed.
Notice that $I_\bfw \subset I$ is a closed interval,
therefore $K_n$ is a compact subset of $I$.

Since $C_{n+1}$ is an extension of $C_n$, for every
$\bfw k\in C_{n+1}$ with $\bfw\in\Sigma^n$ and $k\in\Sigma$,
we have $\bfw\in C_n$.
Since
$I_{\bfw k}\subset I_{\bfw}$, we have $K_{n+1}\subset K_n$. Thus $(K_n)$ is a nested sequence of nonempty compact subsets of $I$,
so 
$$
 K := \bigcap_{n\in\Z^+}K_n\neq\emptyset.
$$

$y \notin T_r(\B_r)$ implies that $K \cap \B_r = \emptyset$.
Choose $ x\in K$ with $\bfw(x) = w_1 w_2 \cdots$.
Then we have $T_r(x)=y$. 
For every $n\in\Z^+$,
we have $  w_1 \cdots w_n \in C_n$ and hence $C_n(x)=C_n$.

Pick $x' \in K$, then $ C_n(x')=C_n(x)$ for every $n\in\Z^+$.  
Lemma~\ref{lem:class-lls-one} implies that $x'\sim_r x$. 
Thus each chain determines a unique local level set contained in $L_r(y)$.

Conversely, assume that $L_{r,u}^{\rm loc} \subset L_r(y)$.
Then we have $T_r(u)=y$ and $u\notin\B_r$.  
It follows from Lemma~\ref{lem:sim-symmetry} that $ u \sim_r (1-u) $.
Without loss of generality, we assume $u <1/2$ so that $u \in I\setminus\B_r$.
Assume that the word address of $u$ is given by $ \bfw(u)= w_1 w_2 \cdots $.

For every $n\in\Z^+$, since
$u\in I_{w_1 \cdots w_n}$ and $T_r(u)=y$, we have
$$
 y\in T_r (I_{w_1 \cdots w_n} ) =\Ra_0\big(C_n(u)\big).
$$
Hence $C_n(u)\in\C_{0,n}(y)$. 
Moreover, Lemma~\ref{lem:class-extension} shows that $C_{n+1}(u)$ is an extension of $C_n(u)$. 
Therefore $(C_n(u))_{n\in\Z^+}$ is a chain through $y$.

Given $ v \in L_{r,u}^{\rm loc} \cap I$, we have $v\sim_r u$.
Hence Lemma~\ref{lem:class-lls-one} gives $ C_n(v)=C_n(u)$ for every $n\in\Z^+$.
Assume that local level sets $ L_{r,u}^{\rm loc}$ and $ L_{r,t}^{\rm loc}$ correspond to the same chain.
Then $ C_n(u)=C_n(t)$
for every $n\in\Z^+$. By Lemma~\ref{lem:class-lls-one}, we obtain $u\sim_r t$.
Thus $ L_{r,u}^{\rm loc}= L_{r,t}^{\rm loc}$.  
The correspondence between local level sets and chains is therefore one-to-one.
\end{proof}

Here and below, an infinite count is denoted by $+\infty$.

\begin{thm}\label{thm:lls-count}
For every $y\in\R\setminus T_r(\B_r)$, we have
$$
 N_r^{\rm loc}(y) =\lim_{n\to\infty}\card(\C_{0,n}(y)).
$$
\end{thm}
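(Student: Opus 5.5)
By Lemma~\ref{lem:class-count-monotonicity} applied with $s=0$, the sequence $\card(\C_{0,n}(y))$ is nondecreasing in $n$. Its limit therefore exists in $\Z^+\cup\{+\infty\}$. By Lemma~\ref{lem:chain-coding}, $N_r^{\rm loc}(y)$ equals the number of chains through $y$. So the claim reduces to a purely combinatorial statement: the number of chains through $y$ equals $\lim_{n}\card(\C_{0,n}(y))$.

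\textbf{Step 1: chains restrict injectively.} Define $\pi_n$ sending a chain $(E_m)_{m\in\Z^+}$ to $E_n\in\C_{0,n}(y)$.

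\textbf{Step 2: every class lies on a chain.} Each $E\in\C_{0,n}(y)$ is the $n$-th term of at least one chain. The proof of Lemma~\ref{lem:class-count-monotonicity} shows that $\Omega(y,E)\neq\emptyset$, so $E$ has an extension $E'\in\C_{0,n+1}(y)$. Iterating this forward, and taking the unique prefix classes backward (given by Lemma~\ref{lem:class-extension} together with the uniqueness of the class containing the prefixes), produces a chain through $y$ whose $n$-th term is $E$. Hence $\pi_n$ is surjective. Note that every prefix class of a class in $\C_{0,n}(y)$ is again in $\C_{0,m}(y)$, since $I_{\bfw}\subset I_{\bfw'}$ for prefixes $\bfw'$ of $\bfw$. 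Consequently the number of chains is at least $\card(\C_{0,n}(y))$ for every $n$, which gives the inequality $N_r^{\rm loc}(y)\ge \lim_n \card(\C_{0,n}(y))$. In particular, if the limit is infinite we are done.

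\textbf{Step 3: the reverse inequality.} Suppose the limit equals a finite $M$. By monotonicity, $\card(\C_{0,n}(y))=M$ for all $n\ge n_0$. Take any $M+1$ distinct chains. Two distinct chains differ at some level, and once they differ they differ at every later level, because the $n$-th term determines all earlier terms as prefix classes. So for $n$ large enough all $M+1$ chains have pairwise distinct $n$-th terms. This gives $M+1$ distinct elements of $\C_{0,n}(y)$, a contradiction. Therefore there are exactly $M$ chains.

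\textbf{Main obstacle.} The only step needing care is the claim in Step 3 that distinct chains remain distinct at all later levels. This reduces to the fact that a class $D\in\C_{0,n+1}$ is an extension of exactly one $E\in\C_{0,n}$, which is stated right after Lemma~\ref{lem:class-extension}. The rest is bookkeeping with Lemma~\ref{lem:chain-coding}. The hypothesis $y\notin T_r(\B_r)$ enters only through that lemma.
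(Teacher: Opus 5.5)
Your argument is correct and follows the same skeleton as the paper: monotonicity of $\card(\C_{0,n}(y))$, the bound $N_r^{\rm loc}(y)\ge\card(\C_{0,n}(y))$ for every $n$, and the reverse bound once the counts stabilize at a finite value. The differences are only tactical. The paper gets the lower bound directly from points $x\in I_{\bfw}\setminus\B_r$ with $T_r(x)=y$ via Lemma~\ref{lem:class-lls-one} rather than by building a chain through each class, and gets the upper bound from stabilization forcing $\card(\Omega(y,E))=1$, whereas you use a pigeonhole on $M+1$ chains (the limit may also be $0$, e.g.\ for $y$ outside the range of $T_r$, which your argument covers unchanged).
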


\begin{proof}
For each $n\in\Z^+$, write
$b_n:=\card\big(\C_{0,n}(y)\big)$.
By Lemma~\ref{lem:class-count-monotonicity}, we have
$b_n\leq b_{n+1}$.

Fix $n\in\Z^+$. 
For each $E\in\C_{0,n}(y)$, let $\bfw\in E$ be arbitrary.
Since
$$
 y\in\Ra_0 (E)=T_r(I_{\bfw}),
$$
there exists $x \in I_{\bfw} \setminus \B_r$ with $T_r(x)=y$.
Hence we have $C_n(x)=E$.
If $C_n(x)\neq C_n(x')$, then Lemma~\ref{lem:class-lls-one} gives $x\not\sim_r x'$,
and hence $L_{r,x}^{\rm loc}\neq L_{r,x'}^{\rm loc}$.
Therefore
$$
 N_r^{\rm loc}(y)\geq\card\big(\C_{0,n}(y)\big)=b_n.
$$
In particular, if $\{ b_n \}$ is unbounded, the desired equality follows immediately.

It remains to consider the case that $\{ b_n \}$ is bounded.
Then there exists $N\in\Z^+$ such that $b_n=b_N$ for every $n \geq N$.
Using the notation in Lemma~\ref{lem:class-count-monotonicity}, for any $n \geq N$, 
$$
 b_n
 = \sum_{E\in\C_{0,n}(y)}1
 \leq \sum_{E\in\C_{0,n}(y)}\card\big(\Omega(y,E)\big)
 \leq b_{n+1}
 = b_n.
$$
It follows that $ \card\big(\Omega(y,E)\big)=1$ for every $n\geq N$ and $E\in\C_{0,n}(y)$.
Consequently, for every $E\in\C_{0,N}(y)$, there is exactly one chain $(C_n)_{n\in\Z^+}$ through $y$ with $C_{N}=E$.
By Lemma~\ref{lem:chain-coding}, each local level set contained in $L_r(y)$ corresponds to a chain.
Hence
$$
 N_r^{\rm loc}(y) \leq \card\big(\C_{0,N}(y)\big) = b_N = \lim_{n\to\infty} b_n.
$$
Together with the opposite inequality proved above, this completes the proof.
\end{proof}

\section{The expected number of local level sets}
We now integrate the counting functions $\card(\C_{s,n}(y))$
to compute the expected number of local level sets. In this section, we denote $p:= \lceil r/2 \rceil,$ and $q:= \lfloor r/2 \rfloor.$
Thus $p+q=r$.

For every interval $J\subset \R$, denote its length by $|J|$.  For any $s\in\Z$ and $n\geq0$, define
$$
 A_{s,n}
 :=\sum_{C\in\C_{s,n}}|\Ra_s(C)|
 =\int_\R\card(\C_{s,n}(y))\,dy.
$$
In particular, $ A_{s,0} = |\Ra_s( \{ \vartheta \} )|=|F_s(I)|$.

\begin{lem}
For every $s\in\Z$ and $n\geq0$,
\begin{equation}\label{eq:cal-asn}
 rA_{s,n+1}=
 \begin{cases}
  pA_{s+1,n}+qA_{-s-1,n},&s\neq-1,\\
  A_{0,n},&s=-1.
 \end{cases}
\end{equation}
\end{lem}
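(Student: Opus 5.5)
We decompose each class of level $n+1$ by its first letter, using Lemma~\ref{lem:class-prefix}, and then relate the ranges through the branch identity of Lemma~\ref{lem:relation-fs}. The identity we need is this: for $C\in\C_{s',n}$ and $k\in\Sigma$ with $s'=\sigma_s(k)$, the class $D=\{k\bfw:\bfw\in C\}$ satisfies
$$
 \Ra_s(D)=F_s\big(S_k(S_{\bfw}(I))\big)=F_s(S_k(0))+r^{-1}F_{s'}(I_{\bfw}).
$$
This follows from Lemma~\ref{lem:relation-fs} applied with $u=S_{\bfw}(v)\in I$, in the form $rF_s(S_k(u))=rF_s(S_k(0))+F_{\sigma_s(k)}(u)$. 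Consequently $|\Ra_s(D)|=r^{-1}|\Ra_{s'}(C)|$. For $n=0$ the same computation applies with $C=\{\vartheta\}$.

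\emph{Case $s\neq-1$.} By Lemma~\ref{lem:class-prefix}, $\C_{s,n+1}$ is the set of classes $\{k\bfw:\bfw\in C\}$ with $k\in\Sigma$ and $C\in\C_{(-1)^k(s+1),n}$. We first check that this parametrization is injective. Different $k$ give classes with different first letters. For a fixed $k$, the map $C\mapsto\{k\bfw:\bfw\in C\}$ is injective because prepending $k$ is injective on words. Summing lengths then gives
$$
 A_{s,n+1}=\sum_{k\in\Sigma}\ \sum_{C\in\C_{(-1)^k(s+1),n}} r^{-1}|\Ra_{(-1)^k(s+1)}(C)|
 =r^{-1}\sum_{k\in\Sigma}A_{(-1)^k(s+1),n}.
$$
Among $\{0,\dots,r-1\}$ there are $\lceil r/2\rceil=p$ even letters and $\lfloor r/2\rfloor=q$ odd letters. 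Hence $rA_{s,n+1}=pA_{s+1,n}+qA_{-s-1,n}$.

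\emph{Case $s=-1$.} By Lemma~\ref{lem:class-prefix}, the classes of $\C_{-1,n+1}$ are $D_C=\{k\bfw:k\in\Sigma,\ \bfw\in C\}$ for $C\in\C_{0,n}$, and the map $C\mapsto D_C$ is injective. By Lemma~\ref{lem:class-function}, or directly by \eqref{eq:cal-fs-0}, we have $F_{-1}(S_k(S_{\bfw}(u)))=r^{-1}F_0(S_{\bfw}(u))$ for every $k$. Therefore $\Ra_{-1}(D_C)=r^{-1}\Ra_0(C)$, and summing over $C$ gives $rA_{-1,n+1}=A_{0,n}$.

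The computations themselves are routine. The step that needs the most care is the bookkeeping: we must confirm that the correspondence in Lemma~\ref{lem:class-prefix} is a genuine bijection, so that no class is counted twice. We must also confirm that the range $\Ra_s$ of a class is computed from any representative word, which Lemma~\ref{lem:class-function} guarantees. Finally, the identity $A_{s,n}=\int\card(\C_{s,n}(y))\,dy$ used in the definition is immediate, since each $\Ra_s(C)$ is an interval (a continuous image of an interval).
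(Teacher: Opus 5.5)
Your proof is correct and follows the paper's argument. You decompose $\C_{s,n+1}$ by first letter via Lemma~\ref{lem:class-prefix}, use the branch identity of Lemma~\ref{lem:relation-fs} (or \eqref{eq:cal-fs-0} when $s=-1$) to get $r|\Ra_s(\{k\bfw:\bfw\in C\})|=|\Ra_{\sigma_s(k)}(C)|$, and sum over the $p$ even and $q$ odd letters; your explicit check that $(k,C)\mapsto\{k\bfw:\bfw\in C\}$ is injective is a detail the paper leaves implicit when it says it is ``summing according to Lemma~\ref{lem:class-prefix}''.
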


\begin{proof}
Suppose that $s\neq-1$. For any $k\in\Sigma$ and
$C\in\C_{(-1)^k(s+1),n}$, Lemma~\ref{lem:relation-fs} gives
$$
 r\big|\Ra_s \big( \{k\bfw:\bfw\in C\} \big) \big|
 =\big| \Ra_{(-1)^k(s+1)}(C) \big|.
$$
Summing according to Lemma~\ref{lem:class-prefix}, we obtain
$$
 rA_{s,n+1}
 =\sum_{k\in\Sigma}A_{(-1)^k(s+1),n}
 =pA_{s+1,n}+qA_{-s-1,n}.
$$

Now we assume $s=-1$. 
For any $C\in\C_{0,n}$, \eqref{eq:cal-fs-0} gives
$$
 r\big|\Ra_{-1} \big(\{k\bfw:k\in\Sigma,\ \bfw\in C\} \big) \big| =\big| \Ra_0(C) \big|  .
$$
Lemma~\ref{lem:class-prefix} also yields
$$
 rA_{-1,n+1}
 =\sum_{C\in\C_{0,n}} \big| \Ra_0(C) \big|
 =A_{0,n}.
$$
\end{proof}

By Lemma~\ref{lem:class-count-monotonicity}, $A_{s,n}$ is nondecreasing for
$n\geq1$.  Also,
$$
 F_s(I)=\bigcup_{k\in\Sigma}F_s \big( S_k(I) \big),
$$
which gives $A_{s,0}\leq A_{s,1}$.  
Hence the limit $ A_s:=\lim_{n\to\infty}A_{s,n}$ exists in $[0,+\infty]$.

To estimate $A_s$, we use the following value of
$M_r:=\max_{x\in[0,1]}T_r(x)$, due to Baba~\cite{B84}.

\begin{lem}\label{lem:max-Baba}
If $r$ is even, then $M_r=r^2/(2r^2-2)$. 
If $r$ is odd, then $M_r=r/(2r-2)$. 
\end{lem}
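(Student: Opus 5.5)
For odd $r$ the trivial bound $\phi\le 1/2$ is attained at a single point. For even $r$ the same bound is not attained, so I will group the series in consecutive pairs and prove an elementary two-term inequality. In both cases I exhibit an explicit maximizer.

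\emph{Odd $r$.} Since $\phi\le 1/2$, the definition \eqref{eq:def-tr} gives $T_r(x)\le \sum_{n\ge0}\frac{1}{2r^n}=\frac{r}{2(r-1)}$ for every $x$. To see that this is attained, take $x=1/2$. Because $r$ is odd, $r^n/2$ is a half-integer for every $n\ge0$, so $\phi(r^n/2)=1/2$ for all $n$. Hence $T_r(1/2)=r/(2r-2)$.

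\emph{Even $r$, upper bound.} The key step is the inequality
$$
\phi(u)+\frac{\phi(ru)}{r}\le \frac12,\qquad u\in\R .
$$
Its proof uses two facts:
\begin{itemize}
\item $\phi(ru)/r=\dist(u,r^{-1}\Z)$;
\item since $r$ is even, $1/2\in r^{-1}\Z+\Z$, so $\dist(u,r^{-1}\Z)\le \dist(u,\tfrac12+\Z)=\tfrac12-\phi(u)$.
\end{itemize}
Grouping the terms of \eqref{eq:def-tr} in pairs $n=2m$ and $n=2m+1$ and applying the inequality with $u=r^{2m}x$ gives
$$
T_r(x)=\sum_{m\ge0}\frac{1}{r^{2m}}\Big(\phi(r^{2m}x)+\frac{\phi(r^{2m+1}x)}{r}\Big)\le \frac12\cdot\frac{1}{1-r^{-2}}=\frac{r^2}{2(r^2-1)}.
$$

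\emph{Even $r$, attainment.} Take $x_0=\frac{r}{2(r+1)}\in I$. Then
$$
rx_0=\frac{r^2}{2(r+1)}=\frac{r-1}{2}+\frac{1}{2(r+1)}.
$$
Since $r$ is even, $(r-1)/2$ is a half-integer, so
$$
rx_0\equiv \frac12+\frac{1}{2(r+1)}=\frac{r+2}{2(r+1)}=1-x_0 \pmod 1 .
$$
Because $\phi$ is even and $1$-periodic, iterating gives $r^n x_0\equiv \pm x_0 \pmod 1$ for all $n\ge0$. Therefore $\phi(r^nx_0)=\phi(x_0)=\frac{r}{2(r+1)}$ for every $n$, and
$$
T_r(x_0)=\frac{r}{2(r+1)}\cdot\frac{r}{r-1}=\frac{r^2}{2(r^2-1)}.
$$
So the bound is attained and $M_r=r^2/(2r^2-2)$.

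The only non-routine step is the pairwise inequality for even $r$. It is also exactly where parity matters: for odd $r$, $1/2\notin r^{-1}\Z+\Z$, and the point $x=1/2$ attains the larger trivial bound instead.
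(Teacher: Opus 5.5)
The paper does not prove this lemma at all: it quotes the value of $M_r$ from Baba~\cite{B84} and uses it only in Lemma~\ref{lem:cal-as} and in the final computation of $A_0/M_r$. Your argument is a correct, self-contained substitute for that citation.

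\emph{Odd case.} Every $r^n$ is odd, so $r^n/2$ is a half-integer for all $n$. Hence the trivial bound $\phi\le\frac12$ is attained at $x=1/2$.

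\emph{Even case, upper bound.} Your two-term inequality is right. Since $\phi(ru)/r=\dist(u,r^{-1}\Z)$ and $\frac12+\Z\subset r^{-1}\Z$, you get $\phi(ru)/r\le\dist(u,\frac12+\Z)=\frac12-\phi(u)$. Regrouping the nonnegative series in pairs $n=2m,\,2m+1$ then gives $T_r\le r^2/(2(r^2-1))$.

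\emph{Even case, attainment.} Your point $x_0=r/(2(r+1))$ satisfies $rx_0\equiv -x_0 \pmod 1$. By induction $r^nx_0\equiv\pm x_0$, so $\phi(r^nx_0)=x_0$ (using $x_0\le\frac12$), and equality holds.

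\emph{Comparison.} Your route costs a few lines but removes the dependence on the external reference. It also has a small by-product: both maximizers, $1/2$ and $r/(2(r+1))$, lie in $I=[0,1/2]$. Together with $T_r(0)=0$ and continuity, this makes the identity $T_r(I)=T_r([0,1])=[0,M_r]$, used in Lemma~\ref{lem:cal-as}, immediate; it also follows from the symmetry $T_r(1-x)=T_r(x)$. The citation keeps the paper short, which is reasonable since only the numerical value of $M_r$ enters the main theorem.
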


\begin{lem}\label{lem:cal-as}
For every $s\in\Z$, we have $A_s<\infty$ and $|A_s-|s|/2|\leq3M_r$.
\end{lem}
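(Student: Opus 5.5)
The plan is to get a two-sided bound on $A_{s,n}$ that holds uniformly in $n$. Write $B_{s,n}:=A_{s,n}-|s|/2$. The lower bound is easy, using monotonicity in $n$ (Lemma~\ref{lem:class-count-monotonicity} together with $A_{s,0}\le A_{s,1}$). For the base level, $F_s(x)=T_r(x)+sx$ on $I=[0,1/2]$ with $0\le T_r\le M_r$ (Lemma~\ref{lem:max-Baba}) gives
$$
\frac{|s|}{2}-M_r\le |F_s(I)|=A_{s,0}\le \frac{|s|}{2}+M_r .
$$
So $A_s\ge A_{s,0}\ge |s|/2-M_r$, and the real work is a uniform upper bound $A_{s,n}\le |s|/2+3M_r$. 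That bound gives both the finiteness of $A_s$ and the claimed estimate.

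For the upper bound I would unfold the recursion \eqref{eq:cal-asn} probabilistically. For $s\ne-1$ it says $A_{s,n+1}=\frac{p}{r}A_{s+1,n}+\frac{q}{r}A_{-s-1,n}$, an average over the two successors $s+1$ and $-s-1$. Both successors have absolute value $|s+1|$, so the map $s\mapsto|s|$ moves deterministically: it increases by $1$ on $s\ge0$ and decreases by $1$ on $s\le-2$. In terms of $B$ this reads
$$
B_{s,n+1}=\frac{|s+1|-|s|}{2}+\frac{p}{r}B_{s+1,n}+\frac{q}{r}B_{-s-1,n}.
$$
The inhomogeneous terms telescope along any path, so $A_{s,n}$ is the expectation of $A_{\sigma,0}$ over the Markov chain $\sigma_0=s$, $\sigma\mapsto\sigma+1$ with probability $p/r$ and $\sigma\mapsto-\sigma-1$ with probability $q/r$. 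At the state $-1$ there is the special rule $A_{-1,n+1}=A_{0,n}/r$: the chain restarts at $0$ with its weight multiplied by $1/r$. Equivalently, $A_{s,n}$ is a sum over words of length $n$ in the class structure, and paths through $-1$ collapse $r$ letters into one class.

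Using $A_{\sigma,0}\le|\sigma|/2+M_r$, the task becomes bounding the weighted expectation of $|\sigma_n|/2$ over surviving paths by $|s|/2+2M_r$, uniformly in $n$. I would split each path at its first visit to $-1$. Before that visit, a path starting at $s\ge0$ must leave the positive region by the $q/r$ jump. It stays positive for $k$ steps with probability $(p/r)^k$, so the expected gain in $|\sigma|$ is geometric and bounded. After jumping negative, it walks back toward $-1$ with $|\sigma|$ decreasing, so $|\sigma|$ cannot exceed its value at the jump. After each visit to $-1$ the contribution is $\frac1r A_{0,\cdot}$, which gives a self-referential inequality of the form $\sup_n A_{0,n}\le c+\frac{1}{r}\sup_n A_{0,n}$ (or $\frac{p}{r}$ times it), and this closes with a finite bound. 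The same decomposition then gives the bound for general $s$.

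The main obstacle is this last step: controlling the competition between the linear growth of $|\sigma|$ in the positive region and the geometric damping coming from the $q/r$ escapes and the $1/r$ factor at $-1$, sharply enough to land on the constant $3M_r$. If the probabilistic bookkeeping becomes messy, I would instead compare each $F_s$ range directly with a $\sigma_n$-dependent linear function plus $T_r$. The identity in Lemma~\ref{lem:cal-fs-relation} writes $F_s(S_{\bfw}(u))$ as an affine piece plus $r^{-n}F_{\sigma_s(\bfw)}(u)$, and summing the class ranges at level $n$ should produce a total variation of the linear part, which is at most $|s|/2$, plus an error of at most $M_r$ per collapsing level, weighted geometrically to total at most $2M_r$.
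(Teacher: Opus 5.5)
Your lower bound is the paper's. Your reduction of the upper bound to a weighted chain is also correct: $A_{s,n}=\E_s\bigl[r^{-N_n}A_{\sigma_n,0}\bigr]$, where $N_n$ is the number of visits to $-1$ before time $n$.

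\textbf{The gap is the excursion estimate.} From $\sigma\le-2$ the chain jumps to $-\sigma-1\ge1$ with probability $q/r$. So after its first negative jump a path need not walk back to $-1$. It can re-enter the positive region any number of times, and $|\sigma|$ can exceed its value at that jump before $-1$ is reached (e.g.\ $0\to1\to-2\to1\to2\to3$). In fact $|\sigma_{k+1}|=|\sigma_k|\pm1$ according to the sign of $\sigma_k$, and the sign flips with probability $q/r$. For even $r$ the signs are independent fair coins, so $|\sigma|$ is a simple symmetric random walk until the visit to $-1$, and the excursion length has infinite mean. There is no geometric damping at all.

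Your inequality $\sup_n A_{0,n}\le c+\frac{1}{r}\sup_n A_{0,n}$ needs $c$ to control $\sup_n\E_0\bigl[|\sigma_n|;\,\tau\ge n\bigr]$, and that control is left unjustified. This quantity is finite, but only through a cancellation (martingale) argument, which your plan does not contain. The fallback does not supply it either. The weighted linear parts $\sum_{C\in\C_{s,n}}r^{-n}|\sigma_s(C)|/2$ are not bounded by $|s|/2$: for $s=0$, $n=1$ the $r$ singleton classes give $1/2$. Their excess over $|s|/2$ is exactly what needs controlling.

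\textbf{The missing idea is a supersolution.} Put $h(s)=|s|+\frac{r}{q}\mathbf{1}_{\{s\ge0\}}$. Then
\begin{itemize}
\item $\frac{p}{r}h(s+1)+\frac{q}{r}h(-s-1)=h(s)$ for $s\ne-1$;
\item $\frac{1}{r}h(0)=\frac{1}{q}\le h(-1)$.
\end{itemize}
Induction on $n$ then gives $\E_s\bigl[r^{-N_n}h(\sigma_n)\bigr]\le h(s)$. Combined with $A_{\sigma,0}\le M_r+|\sigma|/2$, this yields $A_{s,n}\le M_r+h(s)/2$ uniformly in $n$.

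This is exactly the paper's proof, written without probability. One checks by induction on $n$, using \eqref{eq:cal-asn}, that $A_{s,n}\le M_r+\frac{r}{2q}+\frac{s}{2}$ for $s\ge0$ and $A_{s,n}\le M_r-\frac{s}{2}$ for $s\le-1$. The base case is your bound on $A_{s,0}$. The constant $3M_r$ then comes from $\frac{r}{2q}\le 2M_r$, by Lemma~\ref{lem:max-Baba} (with equality for odd $r$). By contrast, a renewal closure of your type only yields $\frac{r}{r-1}c$, with no evident reason to land below $3M_r$.
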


\begin{proof}
It is clear that $ T_r(I)=T_r([0,1])=[0,M_r]$.
Then for any $u,v\in I$,
$$
 |F_s(u)-F_s(v)|
 \leq |T_r(u)-T_r(v)|+|s||u-v|
 \leq M_r+\frac{|s|}{2}.
$$
Since $F_s(I)$ is an interval with $F_s(0)=0$,
\begin{equation}\label{eq:as-cal-0}
 \frac{|s|}{2}-M_r
 \leq \Big|T_r\Big(\frac{1}{2}\Big)+\frac{s}{2}\Big|
 \leq A_{s,0}
 \leq M_r+\frac{|s|}{2}.
\end{equation}
Hence $ A_s\geq A_{s,0}\geq |s|/{2}-M_r$.

For the upper bound, we prove by induction that, for every $n\geq0$,
\begin{equation}\label{eq:asn-cal-upper}
 A_{s,n}\leq
 \begin{cases}
  M_r+\frac{r}{2q}+\frac{s}{2},&s\geq0,\\[6pt]
  M_r-\frac{s}{2},&s\leq-1.
 \end{cases}
\end{equation}
The case $n=0$ follows from \eqref{eq:as-cal-0}. 
Now we assume \eqref{eq:asn-cal-upper} holds for $n$. 
If $s \geq 0$,
$$
 A_{s,n+1}
 \leq\frac{p}{r}\Big( M_r + \frac{r}{2q} + \frac{s+1}{2}\Big)
 +\frac{q}{r} \Big(M_r + \frac{s+1}{2}\Big)
 =M_r + \frac{r}{2q} + \frac{s}{2}
 \leq 3 M_r + \frac{s}{2}.
$$
If $s=-1$, $ A_{-1,n+1} = {A_{0,n}}/{r} \leq  M_r+1/2$.
If $s \leq -2$, then
$$
A_{s,n+1}
 \leq\frac{p}{r}\Big(M_r-\frac{s+1}{2}\Big)
 +\frac{q}{r}\Big(M_r+\frac{r}{2q}+\frac{-s-1}{2}\Big)
 =M_r-\frac{s}{2}.
$$
By \eqref{eq:asn-cal-upper}, we have
$
 A_s = \sup_{n \in \Z^+} A_{s,n} \leq 3M_r +|s|/2.
$
Then the proof is complete.
\end{proof}

\begin{lem}\label{lem:A0-cal}
We have
$$
 A_0=\frac{pr}{2q(r-1)}.
$$
\end{lem}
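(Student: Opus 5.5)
We would pass to the limit in the recursion \eqref{eq:cal-asn}, write down an explicit linear solution of the resulting system, and use the a priori bound of Lemma~\ref{lem:cal-as} to show that this is the only admissible solution.

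\textbf{Step 1: the limiting system.} By Lemma~\ref{lem:cal-as} every $A_s$ is finite, so letting $n\to\infty$ in \eqref{eq:cal-asn} gives
$$
 rA_s=pA_{s+1}+qA_{-s-1}\quad (s\neq-1),\qquad rA_{-1}=A_0 .
$$
It is convenient to fold the index set. For $m\geq0$ put $x_m:=A_m$ and $y_m:=A_{-m-1}$. Taking $s=m\geq0$ gives $rx_m=px_{m+1}+qy_m$. Taking $s=-m-1$ with $m\geq1$ gives $ry_m=py_{m-1}+qx_m$. The remaining relation is $ry_0=x_0$. Lemma~\ref{lem:cal-as} says that $x_m-m/2$ and $y_m-m/2$ are bounded sequences.

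\textbf{Step 2: a particular solution.} Try $x_m=m/2+\alpha$ and $y_m=m/2+\beta$.
\begin{itemize}
 \item The first relation reduces to $q\alpha=p/2+q\beta$.
 \item The second relation reduces to the same identity.
 \item The boundary relation $ry_0=x_0$ gives $r\beta=\alpha$.
\end{itemize}
Substituting $\beta=\alpha/r$ into the first identity gives $q\alpha(1-1/r)=p/2$, that is, $\alpha=pr/(2q(r-1))$. This is the claimed value of $A_0$.

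\textbf{Step 3: uniqueness (the main point).} Let $(x_m,y_m)$ now denote the difference between the true values and this particular solution. By Step~1 it is a bounded solution of the homogeneous system
$$
 rx_m=px_{m+1}+qy_m,\qquad ry_m=py_{m-1}+qx_m,\qquad ry_0=x_0 ,
$$
and we must show it vanishes.
\begin{itemize}
 \item Adding the two recursions and using $r=p+q$ gives $x_{m+1}-x_m=y_m-y_{m-1}=:u_m$ for $m\geq1$.
 \item The first recursion rewrites as $q(x_m-y_m)=pu_m$.
 \item Differencing $x_m-y_m$ in $m$ gives $(x_{m+1}-y_{m+1})-(x_m-y_m)=u_m-u_{m+1}$.
 \item Combining the last two items gives $(p/q+1)(u_{m+1}-u_m)=0$, so $u_m\equiv u$ is constant for $m\geq1$.
 \item Boundedness of $x_m=x_1+(m-1)u$ then forces $u=0$.
 \item Consequently $x_m=y_m$ for $m\geq1$. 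Also $x_{m+1}=x_m$ and $y_m=y_{m-1}$ for $m\geq1$, so all $x_m$ and $y_m$ are equal to a common constant $c$.
\end{itemize}
The boundary relation $ry_0=x_0$ then gives $(r-1)c=0$, hence $c=0$.

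The indexing near $m=0$ needs care: the $y$-recursion is only available for $m\geq1$, and $y_0$ enters only through the boundary relation. This bookkeeping is the step we expect to need the most checking. Once it is done, $A_0=x_0=\alpha=pr/(2q(r-1))$.
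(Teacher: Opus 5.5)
Your argument is correct, but it is organized differently from the paper's. The paper never guesses a solution. From the limiting recursion it shows that $A_s-A_{-s}$ is independent of $s\ge1$, hence $A_{s+1}-A_s=q(r-1)A_0/(pr)$ for all $s\ge0$. So every solution of the limiting system satisfies $A_s=A_0\bigl(1+\tfrac{q(r-1)}{pr}s\bigr)$ for $s\ge0$, and $A_0$ is read off from $A_s/s\to1/2$. You instead exhibit the affine solution $x_m=m/2+\alpha$, $y_m=m/2+\alpha/r$ and prove that the homogeneous system has no nonzero bounded solution. The underlying mechanism is the same: the recursion forces constant increments, which is your $u_m\equiv u$. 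Your version uses the bounded-deviation form of Lemma~\ref{lem:cal-as} rather than only the asymptotic slope. In exchange it gives explicit values of all $A_s$, including negative indices, and makes uniqueness transparent. The paper's route is shorter and needs no ansatz.

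One small slip, at exactly the spot you flagged. From $u=0$ you get $x_m=y_m$ and $x_{m+1}=x_m$ for $m\ge1$, and $y_m=y_{m-1}$ for $m\ge1$. This gives $x_m=c$ for $m\ge1$ and $y_m=c$ for $m\ge0$, but not $x_0=c$. You need the $m=0$ instance of the first recursion, $rx_0=px_1+qy_0=rc$, before invoking $ry_0=x_0$ to conclude $c=0$. Relatedly, $y_0$ does not enter only through the boundary relation. It also appears in the first recursion at $m=0$ and in the second at $m=1$, which you in fact use when deriving $y_1=y_0$.
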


\begin{proof}

By Lemma~\ref{lem:cal-as}, all $A_s$ are finite.
Letting $s=-1$ and $n$ tend to $\infty$ in \eqref{eq:cal-asn} gives $ rA_{-1}=A_0$.
Then for any $  s\neq-1$,
$$
 rA_s=  pA_{s+1}+qA_{-s-1}.
$$
Therefore we obtain $rA_0=pA_1+qA_{-1}$.
Since $p+q=r$, we have
$$
 p(A_1-A_{-1})=(r-1)A_0.
$$
For every $s\in\Z^+$, the same recursion gives
$$
 p(A_{s+1}-A_{-s-1})
 =r(A_s-A_{-s-1})
 =p(A_s-A_{-s}).
$$
Thus $A_s-A_{-s}=A_1-A_{-1}$ for every $s\in\Z^+$.
Consequently, for every $s\geq0$,
$$
 r(A_{s+1}-A_s)
 =q(A_{s+1}-A_{-s-1})
 =q(A_1-A_{-1})
 =\frac{q(r-1)}{p}A_0.
$$
Summing from $0$ to $s-1$, we obtain
$$
A_s = A_0 + \frac{q(r-1) s}{pr}A_0.
$$
Lemma~\ref{lem:cal-as} yields $\lim_{s \to \infty} A_s/s=1/2$. Hence
$$
	A_0 = \lim_{s \to \infty} \Big( 1 +\frac{q(r-1) s}{pr} \Big)^{-1} A_s 
	=   \lim_{s \to \infty}  \frac{pr}{ q(r-1) + pr /s}  \cdot      \frac{A_s}{s}
	=  \frac{pr}{ 2 q(r-1) } .
$$
This completes the proof.
\end{proof}

\begin{proof}[Proof of Theorem~\ref{thm:main}]
By \eqref{eq:def-br}, $\B_r$ is a countable union of finite sets.
Hence $T_r(\B_r)$ is countable and has Lebesgue measure zero.  By
Theorem~\ref{thm:lls-count} and the monotone convergence theorem,
$$
 \int_\R N_r^{\rm loc}(u)\,du
 =\lim_{n\to\infty}\int_\R\card(\C_{0,n}(u))\,du
 =A_0.
$$
Therefore
$$
 \E \big[N_r^{\rm loc}(y)\big]
 =\frac{1}{M_r}\int_0^{M_r}N_r^{\rm loc}(u)\,du
 =\frac{A_0}{M_r}.
$$
If $r$ is even, Lemmas~\ref{lem:max-Baba} and \ref{lem:A0-cal} give
$$
\E \big[N_r^{\rm loc}(y)\big]
= \frac{A_0}{M_r}
 =\frac{r}{2(r-1)} \cdot  \frac{2(r^2-1)}{r^2}
 =\frac{r+1}{r}.
$$
If $r$ is odd, Lemmas~\ref{lem:max-Baba} and \ref{lem:A0-cal} give
$$
\E \big[N_r^{\rm loc}(y)\big]
= \frac{A_0}{M_r}
 =\frac{r(r+1)}{2(r-1)^2}  \cdot \frac{2(r-1)}{r}
 =\frac{r+1}{r-1}.
$$
\end{proof}

\subsection*{Acknowledgements}

The authors would like to thank Shi-Lei Kong, Yan-Qi Qiu and Huo-Jun Ruan for helpful discussions. 
Ying and Zhang were financially supported by Zhejiang University Student Research Training Program No. 2024R401135.

\bibliographystyle{amsplain}

\end{document}